\def\cl{\centerline}
\def\vs{\vspace*}
\def\W{\mathcal{L}}
\def\Z{\mathbb{Z}}
\def\C{\mathbb{C}}
\numberwithin{equation}{section}
\newtheorem{theo}{Theorem}[section]
\newtheorem{defi}[theo]{Definition}
\newtheorem{coro}[theo]{Corollary}
\newtheorem{lemm}[theo]{Lemma}
\newtheorem{exam}[theo]{Example}
\newtheorem{prop}[theo]{Proposition}
\newtheorem{case}{Case}
\newtheorem{rema}[theo]{Remark}
\begin{document}
\begin{center}
{\bf\large Finite irreducible conformal modules over the extended Block type Lie conformal algebra $\mathfrak{B}(\alpha,\beta,p)$}
\footnote {

Corresponding author: Y. Hong (hongyanyong2008@yahoo.com).
}
\end{center}

\cl{Haibo Chen, Yanyong Hong and    Yucai Su
}

\vs{8pt}

{\small\footnotesize
\parskip .005 truein
\baselineskip 3pt \lineskip 3pt
\noindent{{\bf Abstract:} In this paper, we  introduce a class of infinite Lie conformal algebras $\mathfrak{B}(\alpha,\beta,p)$, which are the semi-direct sums of Block type Lie conformal algebra $\mathfrak{B}(p)$
and its non-trivial conformal modules of $\Z$-graded free intermediate series. The  annihilation algebras  are a class of infinite-dimensional Lie algebras, which include
a lot of interesting subalgebras: Virasoro algebra, Block type Lie algebra, twisted Heisenberg-Virasoro algebra and so on.
  We give a
complete classification of all finite non-trivial irreducible conformal modules of  $\mathfrak{B}(\alpha,\beta,p)$ for $\alpha,\beta\in\C, p\in\C^*$.
As an application,  the classifications of finite irreducible conformal modules over a series of finite Lie conformal algebras  $\mathfrak{b}(n)$ for $n\geq1$ are given.
  \vs{5pt}

\noindent{\bf Key words:} Lie conformal algebra,  finite  conformal module, irreducible
\parskip .001 truein\baselineskip 6pt \lineskip 6pt

\noindent{\it Mathematics Subject Classification (2010):}  17B10,  17B65, 17B68.}}
\parskip .001 truein\baselineskip 6pt \lineskip 6pt

\section{Introduction}
The concept of  Lie conformal algebra  was   introduced  by Kac in \cite{K1,K3},
which gives
 an axiomatic description of the operator product expansion
of chiral fields in conformal field theory (see \cite{BPZ}).
 The theory of  Lie conformal algebra  plays an important role in  quantum field theory and vertex algebras.
 Furthermore, Lie conformal algebra  has
close connections to Hamiltonian formalism in the theory of nonlinear evolution (see \cite{BDK}).
In particular, they provide us powerful
tools for   the realization of the program of the study of Lie
(super)algebras and associative algebras (and their representations), satisfying the sole locality property (see \cite{K2}).

 A Lie conformal algebra is called finite if it is finite generated as a $\C[\partial]$-module. Otherwise, it is called infinite. Virasoro Lie conformal algebra $\mathfrak{Vir}$  and current
Lie conformal algebra $\mathrm{Cur}\mathfrak{g}$ associated to a finite-dimensional simple Lie algebra $\mathfrak{g}$ are two classes of important finite Lie
conformal algebras. As is well known,   $\mathfrak{Vir}$ and all current
Lie conformal algebra $\mathrm{Cur}\mathfrak{g}$  exhaust all
finite simple Lie conformal algebras (see \cite{DK}).
In recent years, the structure theory and representation theory of  finite Lie conformal algebras were intensively studied
(see, e.g., \cite{BKV, CK, CK1, DK, WY, LHW, SY1, YL}).

But, the theory of infinite Lie conformal algebra is  relatively backward.
Some interesting examples of  infinite  Lie conformal algebras  were  constructed  by closely linked infinite-dimensional loop   Lie   algebras,
such as loop Virasoro Lie conformal algebra,  loop Heisneberg-Virasoro  Lie conformal algebra, loop Schr\"{o}dinger-Virasoro Lie conformal algebra (see, e.g.,  \cite{WCY,CHSX,FSW}).
 One of the most important examples of  infinite simple conformal
algebras is the general Lie conformal algebra $gc_N$,  which  plays the same important role in
the theory of Lie conformal algebras as the general Lie algebra $gl_N$ does in the theory of Lie algebras. Thus, the general Lie conformal algebra $gc_N$ and its subalgebras have been investigated by many authors (see, e.g., \cite{BM,BKL,SY2,S}). In addition, there are also some infinite simple Lie conformal algebras constructed from Gel'fand-Dorfman bialgebras (see \cite{HW}).

In the present paper, we  define  a new  class of infinite  Lie conformal algebras $\mathfrak{B}(\alpha,\beta,p)$,
 which are associated with  Block type Lie conformal algebras $\mathfrak{B}(p)$ studied in \cite{SXY}.
Then  we determine the classification of all finite non-trivial irreducible conformal modules of  them.
{\it Block type Lie conformal algebras $\mathfrak{B}(p)$}   with $p\neq0$ has a $\C[\partial]$-basis
 $\{L_i\mid i\in\Z_+\}$ and $\lambda$-brackets as follows
\begin{eqnarray}\label{1.1}
 [L_i\, {}_\lambda \, L_j]=\big((i+p)\partial+(i+j+2p)\lambda \big) L_{i+j}.
\end{eqnarray}
Note that the subalgebra
$\mathfrak{Vir}=\C [\partial](\frac{1}{p}L_0)$
of $\mathfrak{B}(p)$ is the  so-called  Virasoro Lie conformal algebra.  All
finite irreducible conformal modules over $\mathfrak{Vir}$  were  explicitly  classified and constructed in \cite{CK}.
 The special case  $\mathfrak{B}(1)$ has a close relation with the general Lie conformal algebra $gc_1$.
It is worth to point out that the super analogue  of $\mathfrak{B}(p)$ was also constructed in \cite{X}   by analyzing certain module structures of $\mathfrak{B}(p)$.

 The rest of this paper
is organized as follows.
 In Section $2$, we introduce some
basic definitions, notations, and related known results about Lie conformal algebras.
In Section 3, we first introduce the definition of  $\mathfrak{B}(\alpha,\beta,p)$ by analyzing certain module structures of $\mathfrak{B}(p)$, and investigate  its  subalgebras, quotient algebras and  extended
annihilation algebras.
In Section 4,  we determine the irreducibility of all free
non-trivial rank one modules over $\mathfrak{B}(\alpha,\beta,p)$. Then we give a complete classification
of all finite non-trivial irreducible conformal modules of $\mathfrak{B}(\alpha,\beta,p)$ by showing that they must be free of rank one.
In Section $5$, we construct a class of new  Lie conformal superalgebras about $\mathfrak{B}(\alpha,\beta,p)$, which are generalizations
 of Lie conformal superalgebras  of Block type.
At last, as an application of our main result, we also obtain the classification of all finite non-trivial irreducible conformal modules  over $\mathfrak{b}(n)$ which are some quotient algebras of $\mathfrak{B}(\alpha,\beta,p)$.

Throughout this paper, all vector spaces, linear maps and tensor products are considered to be over
the field  of complex numbers. We denote by $\C,$ $\C^*$, $\Z$ and $\Z_+$ the sets of complex numbers, nonzero complex numbers,  integers and nonnegative integers, respectively.

\section{Preliminaries}
In this section, we recall some basic definitions and results related to Lie conformal algebras in \cite{DK,K1,K3} for later use.

\begin{defi}{\rm (\!\!\cite{K3})}\label{D1}
A {\it Lie conformal algebra} is a $\C[\partial]$-module $R$ endowed with a  $\lambda$-bracket $[a{}\, _\lambda \, b]$
which defines a
linear map $R\otimes R\rightarrow R[\lambda]$, where $\lambda$ is an indeterminate and $R[\lambda]=\C[\lambda]\otimes R$, subject to the following axioms:
\begin{equation*}
\aligned
&[\partial a\,{}_\lambda \,b]=-\lambda[a\,{}_\lambda\, b],\ \ \ \
[a\,{}_\lambda \,\partial b]=(\partial+\lambda)[a\,{}_\lambda\, b]\quad    {\rm(conformal\ sesquilinearity)},\\
&[a\, {}_\lambda\, b]=-[b\,{}_{-\lambda-\partial}\,a]\quad {\rm (skew\text{-}symmetry)},\\
&[a\,{}_\lambda\,[b\,{}_\mu\, c]]=[[a\,{}_\lambda\, b]\,{}_{\lambda+\mu}\, c]+[b\,{}_\mu\,[a\,{}_\lambda \,c]]\quad{\rm(Jacobi\ identity)}
\endaligned
\end{equation*}
for all $a,b,c\in R.$
\end{defi}

A Lie conformal algebra is called {\it finite} if it is finite generated as a $\C[\partial]$-module. Otherwise, it is called {\it infinite}.

\begin{defi}\label{defi-module}{\rm (\!\! \cite{CK})} A  conformal module $M$ over a Lie conformal algebra $R$ is a  $\C[\partial]$-module endowed with a $\lambda$-action $R\otimes M\rightarrow M[\lambda]$ such that
\begin{equation*}
(\partial a)\,{}_\lambda\, v=-\lambda a\,{}_\lambda\, v,\ a{}\,{}_\lambda\, (\partial v)=(\partial+\lambda)a\,{}_\lambda\, v,\
a\,{}_\lambda\, (b{}\,_\mu\, v)-b\,{}_\mu\,(a\,{}_\lambda\, v)=[a\,{}_\lambda\, b]\,{}_{\lambda+\mu}\, v\end{equation*}
for all $a,b\in R$,  $v\in M$.
\end{defi}

Let $R$ be a Lie conformal algebra.  A conformal $R$-module $M$ is called {\it finite} if it is finitely generated over $\C[\partial]$.
The {\it rank} of a   conformal module   $M$  is its rank as
a $\C[\partial]$-module.  If   $R$-module $M$  has no non-trivial submodules,  conformal module $M$ is
called {\it irreducible}.  If $R\,{}_\lambda v = 0$,   the element $v\in M$ is
called {\it invariant}.
\begin{defi}\label{d2.3}
 An annihilation algebra $\mathcal{A}(R)$ of a Lie conformal algebra $R$ is a Lie algebra with $\C$-basis $\{a(n)\mid a\in R,n\in\Z_+\}$ and relations $\mathrm{(}$for any $a$, $b\in R$ and $k\in \C$$\mathrm{)}$
\begin{eqnarray}
 \label{2asd2.1} &&(ka)_{(n)}=ka_{(n)},~~~~(a+b)_{(n)}=a_{(n)}+b_{(n)},\\
&&\label{22.1}[a_{(m)},b_{(n)}]=\sum_{k\in\Z_+}{m\choose k}(a_{(k)}b)_{(m+n-k)},\ (\partial a)_{(n)}=-na_{n-1},
\end{eqnarray}
where $a_{(k)}b$  is called the $k$-th product, given by $[a\, {}_\lambda \, b]=\sum_{k\in\Z_+}\frac{\lambda^{k}}{k!}(a_{(k)}b) $. Furthermore, an extended annihilation algebra $\mathcal{A}(R)^e$ of $R$
 is defined by $\mathcal{A}(R)^e=\C \partial\ltimes \mathcal{A}(R)$  with   $[\partial,a_{(n)}]=-na_{n-1}$.
\end{defi}
Similar to the definition of the $k$-th product $a_{(k)}b$ of two elements $a, b \in R$, we can
also define {\it $k$-th actions} of $R$ on $M$ for each $j\in \Z_+$, i.e. $a_{(k)}v$ for any $a\in R, v\in M$
\begin{eqnarray}\label{22.111}
a\, {}_\lambda \, v=\sum_{k\in\Z_+}\frac{\lambda^{(k)}}{k!}(a_{(k)}v).
\end{eqnarray}
A close connection between the module of a Lie
conformal algebra and that of its extended annihilation algebra  was studied in \cite{CK} by Cheng and Kac.
\begin{prop}\label{pro2.4}
A conformal module $M$ over a Lie conformal algebra $R$ is the same as a module over the
 Lie algebra $\mathcal{A}(R)^e$  satisfying $a_{(n)}v=0$ for $a\in R,v\in M,n\gg 0$.
\end{prop}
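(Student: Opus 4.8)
The plan is to prove the asserted equivalence by fixing the generating-function dictionary \eqref{22.111}, namely $a\,{}_\lambda\,v=\sum_{k\in\Z_+}\frac{\lambda^{k}}{k!}(a_{(k)}v)$, and showing that under it the three axioms of a conformal module in Definition \ref{defi-module} correspond, coefficient by coefficient, to the defining relations of the extended annihilation algebra $\mathcal{A}(R)^e$ in Definition \ref{d2.3}. Each computation below is a formal identity of power series in $\lambda$ (and $\mu$) that may be read in either direction, so verifying it once yields both implications at the same time.

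Concretely, starting from a conformal $R$-module $M$, I would define operators $a_{(k)}$ on $M$ as the coefficients appearing in \eqref{22.111} and let $\partial$ act via the given $\C[\partial]$-module structure. Since $a\,{}_\lambda\,v$ lies in $M[\lambda]$ and is therefore polynomial in $\lambda$, only finitely many $a_{(k)}v$ are nonzero, which is exactly the locality condition $a_{(n)}v=0$ for $n\gg0$. Conversely, given an $\mathcal{A}(R)^e$-module $M$ satisfying this locality, the same formula \eqref{22.111} has a finite right-hand side and hence defines a $\lambda$-action $R\otimes M\to M[\lambda]$. The linearity relations \eqref{2asd2.1} are automatic from bilinearity of the $\lambda$-action, so in both directions it remains only to match the sesquilinearity and bracket axioms.

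The two sesquilinearity relations are short reindexing arguments: substituting $(\partial a)_{(n)}=-n\,a_{(n-1)}$ into \eqref{22.111} and shifting the index gives $(\partial a)\,{}_\lambda\,v=-\lambda\,a\,{}_\lambda\,v$, while using the commutator $[\partial,a_{(n)}]=-n\,a_{(n-1)}$ together with the $\partial$-action on $M$ gives $a\,{}_\lambda\,(\partial v)=(\partial+\lambda)\,a\,{}_\lambda\,v$. The main obstacle is the correspondence between the Jacobi identity and the bracket relation \eqref{22.1}. For this I would expand the left-hand side of $a\,{}_\lambda\,(b\,{}_\mu\,v)-b\,{}_\mu\,(a\,{}_\lambda\,v)=[a\,{}_\lambda\,b]\,{}_{\lambda+\mu}\,v$ as $\sum_{m,n\in\Z_+}\frac{\lambda^{m}\mu^{n}}{m!\,n!}[a_{(m)},b_{(n)}]v$ and insert \eqref{22.1}, and expand the right-hand side as $\sum_{k,j\in\Z_+}\frac{\lambda^{k}}{k!}\,\frac{(\lambda+\mu)^{j}}{j!}\big((a_{(k)}b)_{(j)}v\big)$. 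After the substitution $m\mapsto m+k$, which turns $\binom{m+k}{k}/(m+k)!$ into $1/(k!\,m!)$ and the internal index $m+n-k$ into $m+n$, and after collecting the terms with fixed total degree $j=m+n$, the two expansions agree precisely by the binomial identity $\sum_{m+n=j}\binom{j}{m}\lambda^{m}\mu^{n}=(\lambda+\mu)^{j}$. Comparing coefficients of each monomial $\lambda^{m}\mu^{n}$ then shows that the conformal Jacobi identity holds if and only if \eqref{22.1} does, completing the argument in both directions.
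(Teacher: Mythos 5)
Your proposal is correct. Note that the paper itself gives no proof of Proposition \ref{pro2.4}: it is quoted as a known result of Cheng and Kac \cite{CK}. Your argument — expanding the $\lambda$-action as the generating function \eqref{22.111}, identifying the locality condition $a_{(n)}v=0$ for $n\gg0$ with polynomiality of $a\,{}_\lambda\,v$, matching the two sesquilinearity axioms with $(\partial a)_{(n)}=-na_{(n-1)}$ and $[\partial,a_{(n)}]=-na_{(n-1)}$, and reducing the conformal Jacobi identity to \eqref{22.1} by the reindexing $m\mapsto m+k$ and the binomial identity for $(\lambda+\mu)^{j}$ — is exactly the standard coefficient-comparison proof found in that reference, and every step checks out, including the observation that the dictionary is reversible so that both implications follow from the same computation.
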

 The following result can be found in  \cite{CK,K3}, which   plays  an important role in our classification.
\begin{lemm}\label{lemm2.5}
 Let $\W$ be a Lie superalgebra with a descending sequence of subspaces  $\W\supset\W_0\supset\W_1\supset\cdots$ and an element $\partial$ satisfying $[\partial, \W_n] =\W_{n-1}$ for $n\geq1$. Let $V$ be an $\W$-module and let
$$V_n=\{v\in V | \W_n v =0\},\  n\in\Z_+.$$
Suppose that $V_n\neq0$ for $n\gg0$  and let $N$ denote the minimal such $n$. Suppose that $N\geq1$. Then $V=\C[\partial]\otimes_{\C} V_N$. In particular, $V_N$ is finite-dimensional
 if $V$ is a finitely generated $\C[\partial]$-module.
\end{lemm}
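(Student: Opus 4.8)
The plan is to exhibit $V$ as a free $\C[\partial]$-module whose bottom layer is $V_N$, by analyzing how the operator $\partial$ interacts with the increasing filtration $V_0\subseteq V_1\subseteq\cdots$. First I would record the elementary structural facts about this filtration, then prove a linear-independence (freeness) statement by a leading-term argument, and finally prove the spanning statement $V=\C[\partial]V_N$. The spanning is where the real work lies; everything else is formal.

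\emph{Structural facts.} Since $\W\supset\W_0\supset\W_1\supset\cdots$ is descending, the $V_n$ are increasing, because $\W_n v=0$ forces $\W_{n+1}v=0$; minimality of $N$ then gives $V_{N-1}=0$. Next, $\partial$ (which is even, so no super signs intervene) shifts the filtration up by one: for $v\in V_n$ and $x\in\W_{n+1}$ the module identity $x(\partial v)=\partial(xv)-[\partial,x]v$, combined with $[\partial,\W_{n+1}]=\W_n$ and $xv=0$ (as $\W_{n+1}\subseteq\W_n$), gives $x(\partial v)=0$, whence $\partial V_n\subseteq V_{n+1}$. The same identity supplies the crucial injectivity input: if $v\in V_n$ and $\partial v\in V_n$, then for every $x\in\W_n$ we get $0=x(\partial v)=-[\partial,x]v$, and since $[\partial,\cdot]$ maps $\W_n$ onto $\W_{n-1}$ this yields $\W_{n-1}v=0$, i.e. $v\in V_{n-1}$. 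Thus $\partial$ induces \emph{injective} maps $\bar\partial\colon V_n/V_{n-1}\to V_{n+1}/V_n$ for $n\geq1$; specializing to $\partial v=0$ and iterating down to $V_{N-1}=0$ shows $\partial$ is injective on all of $V$.

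\emph{Freeness.} Composing the injections $\bar\partial$ gives an injection $V_N=V_N/V_{N-1}\hookrightarrow V_{N+m}/V_{N+m-1}$, $v\mapsto\partial^m v \bmod V_{N+m-1}$. Hence if $\sum_{j=0}^m\partial^j v_j=0$ with $v_j\in V_N$ and $v_m\neq0$, then, since $\partial^j v_j\in V_{N+j}\subseteq V_{N+m-1}$ for $j<m$, rearranging gives $\partial^m v_m\in V_{N+m-1}$, so $\bar\partial^m(v_m)=0$, contradicting injectivity. Therefore $\sum_{k\geq0}\partial^k V_N$ is a direct sum and $\C[\partial]V_N\cong\C[\partial]\otimes_\C V_N$ is free.

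\emph{Spanning and conclusion.} It remains to prove $V=\C[\partial]V_N$. Using that every element of $V$ is killed by $\W_n$ for $n\gg0$, i.e. $V=\bigcup_n V_n$, I would induct on $n\geq N$ to show $V_n\subseteq\C[\partial]V_N$, the inductive step reducing to the identity $V_{n+1}=V_n+\partial V_n$, equivalently the \emph{surjectivity} of $\bar\partial\colon V_n/V_{n-1}\to V_{n+1}/V_n$. This is the main obstacle: given $w\in V_{n+1}$ one must produce $v\in V_n$ with $w-\partial v\in V_n$, and unwinding the module axioms this amounts to realizing the linear map $\W_{n-1}\to V$, $[\partial,x]\mapsto -xw$, as the action of a single element $v\in V_n$ — precisely the point where the surjectivity $[\partial,\W_n]=\W_{n-1}$ and the polynomial nature of the action of $\W$ on a conformal module must be exploited. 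Once surjectivity is established, $\bar\partial$ is an isomorphism at every level, the induction gives $V=\C[\partial]V_N$, and together with freeness we conclude $V=\C[\partial]\otimes_\C V_N$. Finally, if $V$ is finitely generated over $\C[\partial]$, then this free module has finite rank, so $V_N$ is finite-dimensional.
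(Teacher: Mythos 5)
The paper offers no proof of this lemma at all --- it is quoted verbatim from \cite{CK,K3} --- so your attempt has to be judged on its own merits against the standard Cheng--Kac argument. Your structural facts and your freeness step are correct and are essentially that argument: the induced maps $\bar\partial\colon V_n/V_{n-1}\to V_{n+1}/V_n$ are well defined and injective for $n\geq 1$, and the leading-term computation shows that the $\C[\partial]$-submodule generated by $V_N$ is free, i.e.\ $\C[\partial]V_N\cong\C[\partial]\otimes_\C V_N$. That containment already yields the final clause ($V_N$ finite-dimensional when $V$ is a finitely generated $\C[\partial]$-module), since a finitely generated module over the principal ideal domain $\C[\partial]$ cannot contain a free submodule of infinite rank.

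The genuine gap is the spanning step, which you explicitly leave open as ``the main obstacle,'' and which in fact cannot be closed: the surjectivity of $\bar\partial$, equivalently $V_{n+1}=V_n+\partial V_n$, is false for a general module satisfying the stated hypotheses, so the equality $V=\C[\partial]\otimes_\C V_N$ does not follow from them. A concrete counterexample: let $\W$ be the extended annihilation algebra of the Virasoro conformal algebra with its standard filtration $\W_n=\bigoplus_{m\geq n}\C L_{(m)}$, and let $V=\C[\partial]u\oplus\C[\partial]w$ be the direct sum of the rank-one conformal modules determined by $L\,{}_\lambda\, u=(\partial+b)u$ and $L\,{}_\lambda\, w=(\partial+\lambda+b)w$. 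Every vector is killed by some $\W_n$, one checks $V_0=0$ and $V_1=\C u$, so $N=1$ and $V_N$ is one-dimensional, while $V$ is free of rank two; here $\bar\partial\colon V_1/V_0\to V_2/V_1$ is not surjective because $w\in V_2$ but $w\notin\C u+\C\partial u$. The honest conclusion provable in this generality is the containment $V\supseteq\C[\partial]\otimes_\C V_N$ that your argument establishes; the equality is recovered in the paper's application (the proof of Lemma \ref{lemm789}) only by combining this freeness with the irreducibility of $V$. So rather than hunting for a proof of surjectivity, you should either weaken the conclusion to the freeness of $\C[\partial]V_N$ or add a hypothesis such as irreducibility of $V$ (or that $V_N$ generates $V$ over $\C[\partial]$).
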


\section{ Lie conformal algebra $\mathfrak{B}(\alpha,\beta,p)$}

In this section, we  define a class of extended Block type   Lie conformal algebras $\mathfrak{B}(\alpha,\beta,p)$
 by using Block type Lie conformal algebras $\mathfrak{B}(p)$ and  their  intermediate series modules.
Now we recall the definition of intermediate series modules of  $\mathfrak{B}(p)$ (see \cite{X}).

For  $\alpha,\beta\in\C,p \in\C^*$,   the   $\C[\partial]$-module $V(\alpha,\beta,p)=\bigoplus_{i\in\Z}\C[\partial]v_i$  is a $\Z$-graded free intermediate
series module over $\mathfrak{B}(p)$ with  $\lambda$-action as follows:
\begin{equation*}
 L_i\, {}_\lambda \, v_j=\Big((i+p)(\partial+\beta)+(i+j+\alpha)\lambda\Big)v_{i+j}.
\end{equation*}
Then we can  define  infinite Lie conformal algebra called
  {\it extended Block type Lie conformal algebra} $\mathfrak{B}(\alpha,\beta,p)$, which
 has a  $\C[\partial]$-basis $\{L_{i},W_{i}\mid i\in\Z_+ \}$
satisfying \eqref{1.1} and the following   $\lambda$-brackets
\begin{eqnarray}\label{3.1}
[L_i\, {}_\lambda \, W_j]=\Big((i+p)(\partial+\beta)+(i+j+\alpha)\lambda\Big) W_{i+j},\  [W_i\, {}_\lambda \, W_j]=0
\end{eqnarray}
for any $\alpha,\beta\in\C,p\in\C^*$.
Here,  $\mathfrak{B}(\alpha,\beta,p)$ is regarded as   a $\Z$-graded Lie conformal algebra. We can also introduce a class  of $\Z$-graded free intermediate series modules over $\mathfrak{B}(\alpha,\beta,p)$. Given $a,b,c\in\C$, let $V_{a,b,c}=\oplus_{i\in\Z}\C[\partial]v_i$ and
define
\begin{eqnarray*}
&&L_i\, {}_\lambda \, v_j=\Big((i+p)(\partial+b)+(i+j+a)\lambda\Big)v_{i+j},
\\&& W_i\, {}_\lambda \, v_j=\delta_{\alpha,p}\delta_{\beta,0}cv_{i+j}\nonumber\quad \mbox{for any} \ i,j\in\Z.
\end{eqnarray*}

Some interesting features on this class of Lie conformal algebras are presented as follows.
\subsection{Subalgebras}
Setting $\alpha^\prime=\frac{\alpha}{p},L=\frac{1}{p}L_0,W= W_0\in \mathfrak{B}(\alpha,\beta,p)$ in  \eqref{1.1} and \eqref{3.1},  we see that
$$[L\, {}_\lambda \, L]=(\partial+2\lambda)L,\  [L\, {}_\lambda \, W]=\big(\partial+\alpha^\prime\lambda+\beta\big)W,\ [W\, {}_\lambda \, W]=0$$
for $\alpha^\prime,\beta\in\C$.
Namely, the subalgebra
$$\C[\partial]L\oplus\C[\partial]W$$
 of $\mathfrak{B}(\alpha,\beta,p)$ is  the $\mathcal{W}(\alpha^\prime,\beta)$ Lie  conformal algebra  (see \cite{LHW}, $\beta=0$ also see \cite{WY}).
Here, we note that  $\mathcal{W}(1,0)$  and $\mathcal{W}(2,0)$ are respectively    Heisenberg-Virasoro  Lie conformal algebra and $W(2,2)$  Lie conformal algebra.
An interesting fact about  them are given.
Now we compute in  $\mathcal{W}(1,0)$.
For any $h\in\C^*,$ it is clear that
\begin{eqnarray*}
[(L+hW)\, {}_\lambda \, (L+hW)]=(\partial+2\lambda)(L+hW).
\end{eqnarray*}
Hence, $\C[\partial](L+hW)$ spans a  subalgebra of Heisenberg-Virasoro Lie conformal algebra and $W(2,2)$  Lie conformal algebra in common which is isomorphic to the classical Virasoro Lie conformal algebra.

Moreover, the Lie conformal algebra $\mathfrak{B}(\alpha,\beta,p)$ has a non-trivial abelian conformal ideal $\{W_i\mid i\in\Z_+\}$ as  a  $\C[\partial]$-module, which implies
that it is neither simple nor semi-simple.

\subsection{Quotient algebras}
Considering the quotient algebras of $\mathfrak{B}(\alpha,\beta,p)$, we will  get many finite Lie conformal algebras.
Note that $\mathfrak{B}(\alpha,\beta,p)$ is $\Z$-graded under the sense that $\mathfrak{B}(\alpha,\beta,p)=\bigoplus_{k\in\Z_+}\mathfrak{B}(\alpha,\beta,p)_k$,
 where $\mathfrak{B}(\alpha,\beta,p)_k=\C[\partial] L_k\oplus\C [\partial] W_k$. For $n\in\Z_+,$  define a subspace $\mathfrak{B}(\alpha,\beta,p)_{\langle n\rangle}$
of $\mathfrak{B}(\alpha,\beta,p)$ by
$$\mathfrak{B}(\alpha,\beta,p)_{\langle n\rangle}=\bigoplus_{i\geq n}\C[\partial]L_i\oplus\bigoplus_{i\geq n}\C[\partial]W_i.$$
It is clear that $\mathfrak{B}(\alpha,\beta,p)_{\langle n\rangle}$ is an ideal of  $\mathfrak{B}(\alpha,\beta,p)$.
For any $n\in\Z_+$, we define
\begin{eqnarray}\label{b3.2}
\mathfrak{B}(\alpha,\beta,p)_{[n]}=\mathfrak{B}(\alpha,\beta,p)/\mathfrak{B}(\alpha,\beta,p)_{\langle n+1\rangle}
\end{eqnarray}
 Note that $\mathfrak{B}(\alpha,\beta,p)_{[0]}\cong\mathcal{W}(\alpha^\prime,\beta)$. Taking $p=-n$,  we can define  the quotient algebras
   $\mathfrak{B}(\alpha,\beta,-n)_{[n]}$
 by the following relations
\begin{eqnarray}\label{bn3.2}
\mathfrak{b}(n)=\mathfrak{B}(\alpha,\beta,-n)_{[n]}=\mathfrak{B}(\alpha,\beta,-n)/\mathfrak{B}(\alpha,\beta,-n)_{\langle n+1\rangle}
\end{eqnarray}
with $n\geq1$. They can produce a series of new finite non-simple Lie conformal algebras.
Two examples for $n=1,2$ are presented as follows.
\begin{exam}
Setting $L=-\bar L_0,W=\bar W_0,M=\bar L_1,G=\bar W_1\in \mathfrak{b}(1)$, we have the following non-trivial relations
 \begin{eqnarray*}
&&[L\, {}_\lambda \, L]=(\partial+2\lambda)L,\  [L\, {}_\lambda \, W]=\big(\partial+\beta-\alpha\lambda\big)W
\\&&[L\, {}_\lambda \, M]=(\partial+\lambda)M, \ [L\, {}_\lambda \, G]=\big(\partial+\beta-(1+\alpha)\lambda\big)G
\\&&[M\, {}_\lambda \, W]= (1+\alpha)\lambda G.
\end{eqnarray*}
Other  $\lambda$-brackets   are given by skew-symmetry.  Note that  $\C[\partial]L\oplus \C[\partial]M$  and $\C[\partial]L\oplus \C[\partial]W$ are respectively    Heisenberg-Virasoro Lie conformal algebra and
   $\mathcal{W}(\alpha^\prime,\beta)$ Lie conformal algebra. Maybe  $\mathfrak{b}(1)$ should be called      Heisenberg-$\mathcal{W}(\alpha^\prime,\beta)$ Lie conformal algebra.
\end{exam}

\begin{exam}
Set  $L=-\frac{1}{2}\bar L_0,Y=\bar L_1,M=-\bar L_2,W=\bar W_0,G=\bar W_1,H=\bar W_2\in \mathfrak{b}(2)$.   The non-trivial $\lambda$-brackets are as follows
 \begin{eqnarray*}
&&[L\, {}_\lambda \, L]=(\partial+2\lambda)L,\  [L\, {}_\lambda \, W]=\big(\partial+\beta-\frac{1}{2}\alpha\lambda\big)W
\\&&[L\, {}_\lambda \, Y]=(\partial+\frac{3}{2}\lambda)Y, \ [L\, {}_\lambda \, G]=\big(\partial+\beta-\frac{1}{2} (1+\alpha)\lambda\big)G,
\\&&[L\, {}_\lambda \, M]=(\partial+\lambda)M,\ [L\, {}_\lambda \, H]=\big(\partial+\beta-\frac{1}{2}(2+\alpha)\lambda\big)H,
\\&&[Y\, {}_\lambda \, Y]=(\partial+2\lambda)M,\ [Y\, {}_\lambda \, W]= \big(-(\partial+\beta)+(1+\alpha)\lambda\big)G,
 \\&& [Y\, {}_\lambda \, G]= \big(-(\partial+\beta)+(2+\alpha)\lambda\big)H,\
[M\, {}_\lambda \, W]=-(2+\alpha) \lambda H.
\end{eqnarray*}
Other  $\lambda$-brackets   are given by skew-symmetry.  We note that  $\C[\partial]L\oplus \C[\partial]Y\oplus \C[\partial]M$  and $\C[\partial]L\oplus \C[\partial]W$ are respectively    Schr\"{o}dinger-Virasoro Lie conformal algebra and
   $\mathcal{W}(\alpha^\prime,\beta)$ Lie conformal algebra. Maybe  $\mathfrak{b}(2)$ should be called     Schr\"{o}dinger-$\mathcal{W}(\alpha^\prime,\beta)$ Lie conformal algebra.
\end{exam}

\subsection{Extended annihilation algebra}
Now   we   give the explicit Lie brackets of $\mathcal{A}(\mathfrak{B}(\alpha,\beta,p))$ and $\mathcal{A}(\mathfrak{B}(\alpha,\beta,p))^e$.
\begin{lemm}\label{4.1}
 \begin{itemize}\parskip-7pt
 \item[{\rm (1)}] The annihilation  algebra  of $\mathfrak{B}(\alpha,\beta,p)$ is
$$\mathcal{A}(\mathfrak{B}(\alpha,\beta,p))=\{L_{i,m},W_{j,n}\mid i,j\in\Z_+,m\in \Z_+\cup\{-1\}, n\in \Z_+\}$$
with the following      Lie brackets:
\begin{equation}\label{44.1}
\aligned
&[L_{i,m},L_{j,n}]=\big((m+1)(j+p)-(n+1)(i+p)\big)L_{i+j,m+n},
\\&[L_{i,m},W_{j,n}]=\big((m+1)(j-p+\alpha)-n(i+p)\big)W_{i+j,m+n}+\beta(i+p)W_{i+j,m+n+1},
\\&[W_{i,m},W_{j,n}]=0.
\endaligned
\end{equation}
 \item[{\rm (2)}] The extended annihilation algebra  is
 $$\mathcal{A}(\mathfrak{B}(\alpha,\beta,p))^e=\{L_{i,m},W_{j,n},\partial\mid i,j\in\Z_+,m\in \Z_+\cup\{-1\}, n\in \Z_+\}$$
satisfying
\eqref{44.1} and  $[\partial,L_{i,m}]=-(m+1)L_{i,m-1},[\partial,W_{j,n}]=-nW_{j,n-1}$.
\end{itemize}

\end{lemm}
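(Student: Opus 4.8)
The plan is to read off everything directly from the definition of the $k$-th products together with the bracket formula \eqref{22.1}, once a convenient indexing convention is fixed. The key observation is that the symbols $L_{i,m}$ and $W_{j,n}$ in the statement are nothing but relabelings of canonical basis elements of $\mathcal{A}(\mathfrak{B}(\alpha,\beta,p))$: I would set $L_{i,m}:=(L_i)_{(m+1)}$ for $m\in\Z_+\cup\{-1\}$ and $W_{j,n}:=(W_j)_{(n)}$ for $n\in\Z_+$. As $m$ runs over $\Z_+\cup\{-1\}$ the shifted index $m+1$ runs over all of $\Z_+$, so by Definition \ref{d2.3} (after reducing every $(\partial^k L_i)_{(n)}$, $(\partial^k W_j)_{(n)}$ via $(\partial a)_{(n)}=-n\,a_{(n-1)}$) these symbols range over a genuine $\C$-basis. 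The one-step shift in the $L$-index is purely a normalization, chosen so that the Virasoro-type part of the bracket comes out in Witt form.

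First I would extract the nonzero $k$-th products by expanding the $\lambda$-brackets \eqref{1.1} and \eqref{3.1} according to $[a\,{}_\lambda\,b]=\sum_k\frac{\lambda^k}{k!}(a_{(k)}b)$. Since each bracket is affine in $\lambda$, only $k=0,1$ survive:
$$(L_i)_{(0)}L_j=(i+p)\partial L_{i+j},\qquad (L_i)_{(1)}L_j=(i+j+2p)L_{i+j},$$
and likewise $(L_i)_{(0)}W_j=(i+p)(\partial+\beta)W_{i+j}$, $(L_i)_{(1)}W_j=(i+j+\alpha)W_{i+j}$, while all products of two $W$'s vanish by \eqref{3.1}.

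Next I would substitute these into \eqref{22.1}. Because only $k=0,1$ contribute, the sum $\sum_k\binom{m}{k}((L_i)_{(k)}L_j)_{(m+n-k)}$ collapses to two terms, and the relation $(\partial a)_{(n)}=-n\,a_{(n-1)}$ turns the $\partial L_{i+j}$ and $\partial W_{i+j}$ sitting inside the $0$-th products into lowered $k$-th products. Collecting coefficients gives, for the $L$--$L$ bracket, the factor $-(m+n)(i+p)+m(i+j+2p)$ on $(L_{i+j})_{(m+n-1)}$, which simplifies to $m(j+p)-n(i+p)$ upon writing $i+j+2p=(i+p)+(j+p)$. The $L$--$W$ bracket produces both a $(W_{i+j})_{(m+n-1)}$ term with coefficient $-(m+n)(i+p)+m(i+j+\alpha)$ and, from the $\beta$-part of $(L_i)_{(0)}W_j$, an additional $(W_{i+j})_{(m+n)}$ term with coefficient $\beta(i+p)$.

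Finally I would translate back through the shift by replacing $m$ with $m+1$ (and, in the $L$--$L$ case, $n$ with $n+1$) in the expressions just found; a short rearrangement using $i+j+\alpha=(i+p)+(j-p+\alpha)$ converts the coefficients into $(m+1)(j+p)-(n+1)(i+p)$ and $(m+1)(j-p+\alpha)-n(i+p)$, matching \eqref{44.1}, with the $\beta$-term landing on $W_{i+j,m+n+1}$. Part (2) is then immediate: by Definition \ref{d2.3} the extended algebra adjoins $\partial$ with $[\partial,a_{(n)}]=-n\,a_{(n-1)}$, and evaluating this on $(L_i)_{(m+1)}$ and $(W_j)_{(n)}$ yields $[\partial,L_{i,m}]=-(m+1)L_{i,m-1}$ and $[\partial,W_{j,n}]=-nW_{j,n-1}$. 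I expect no genuine difficulty here; the only point demanding care is the bookkeeping of the index shift together with the sign that $\partial$ introduces when it lowers an index, which is exactly where an off-by-one slip would occur.
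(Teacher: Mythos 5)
Your proposal is correct and follows essentially the same route as the paper: extract the $k$-th products (only $k=0,1$ survive since the $\lambda$-brackets are affine in $\lambda$), substitute into the bracket formula \eqref{22.1} using $(\partial a)_{(n)}=-na_{(n-1)}$, and then relabel via the same shift $L_{i,m}=(L_i)_{(m+1)}$, $W_{j,n}=(W_j)_{(n)}$. The coefficient simplifications you indicate ($i+j+2p=(i+p)+(j+p)$, $i+j+\alpha=(i+p)+(j-p+\alpha)$) check out, and part (2) is handled exactly as in the paper.
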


\begin{proof}
It follows from the definition of the $k$-th product    in Definition  \ref{d2.3} and   $\mathfrak{B}(\alpha,\beta,p)$ that we have
\begin{eqnarray*}
L_i\,{}_{{}_{(k)}} L_j&=&
\begin{cases}
(i+p)\partial L_{i+j} &\ \mbox{if}\  k=0,\\[4pt]
(i+j+2p)L_{i+j}&\  \mbox{if} \ k=1,\\[4pt]
0&\  \mbox{if} \ k\geq2,
\end{cases}\\
L_i\,{}_{{}_{(k)}} W_j&=&
\begin{cases}
(i+p)(\partial+\beta)W_{i+j} &\ \mbox{if}\  k=0,\\[4pt]
(i+j+\alpha)  W_{i+j}&\  \mbox{if} \ k=1,\\[4pt]
0&\  \mbox{if} \ k\geq2,
\end{cases}\\
W_i\,{}_{{}_{(k)}} W_j&=&0 \quad \mathrm{for \ any}\quad k\in\Z_+.
\end{eqnarray*}
Then by \eqref{2asd2.1} and \eqref{22.1}, we check that:
 \begin{equation}\label{44.2}
\aligned
&[(L_i)_{(m)},(L_j)_{(n)}]=\big(m(j+p)-n(i+p)\big)(L_{i+j})_{(m+n-1)},
\\&[(L_i)_{(m)},(W_j)_{(n)}]=\big(m(j-p+\alpha)-n (i+p)\big)(W_{i+j})_{(m+n-1)}+\beta(i+p)(W_{i+j})_{(m+n)},
\\&[(W_i)_{(m)},(W_j)_{(n)}]=0,\ [\partial,(L_i)_{(m)}]=-m(L_i)_{(m-1)},\ [\partial,(W_j)_{(n)}]=-n(W_j)_{(n-1)}.
\endaligned
\end{equation}
 Setting $L_{i,m}=(L_i)_{(m+1)}, W_{j,n}=(W_j)_{(n)}$ in \eqref{44.2} for $i,j\in\Z_+,m\in\Z_+\cup\{-1\},n\in\Z_+$, the lemma holds.
\end{proof}
\begin{rema}
  The Lie algebra $\mathcal{A}\big(\mathfrak{B}(\alpha,\beta,p)\big)$ is interesting in the sense that it contains the following subalgebras:
  \begin{itemize}\parskip-7pt
 \item[{\rm (a)}] when $\alpha=p,\beta=0$, the well-known twisted Heisenberg-Virasoro algebra is isomorphic to the Lie algebra  spanned by   $\{L_{i,0},W_{j,0}\mid i,j\in\Z\}$;
 \item[{\rm (b)}] the Lie algebra generates by $\{L_{0,m},W_{0,n}\mid m,n\in\Z\}$ is isomorphic to the annihilation algebra of $\mathcal{W}(\alpha^\prime,\beta)$ in \cite{LHW}.
\end{itemize}

\end{rema}

Next, we construct a subquotient algebra of  $\mathcal{A}(\mathfrak{B}(\alpha,\beta,p))$ and study its representation theory. Clearly,
$$\mathcal{A}(\mathfrak{B}(\alpha,\beta,p))_+=\{L_{i,m},W_{j,n}\mid i,j,m,n\in \Z_+\}$$
is a subalgebra of $\mathcal{A}(\mathfrak{B}(\alpha,\beta,p))$. For any fixed $k,N\in\Z_+$,
$$\mathcal{I}(k,N)=\{L_{i,m},W_{j,n}\in\mathcal{A}(\mathfrak{B}(\alpha,\beta,p))_+\mid i,j>k,m,n>N\}$$
is an ideal of $\mathcal{A}(\mathfrak{B}(\alpha,\beta,p))_+$.
Denote  $$\mathcal{Q}(k,N)=\mathcal{A}(\mathfrak{B}(\alpha,\beta,p))_+/\mathcal{I}(k,N).$$
\begin{lemm}\label{lemm555}
Let $V$ be a non-trivial finite-dimensional irreducible module over $\mathcal{Q}(k,N)$. Then we have $\mathrm{dim}(V)=1.$
\end{lemm}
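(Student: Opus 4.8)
The plan is to show that the image of $\mathcal{Q}(k,N)$ in $\mathfrak{gl}(V)$ is a solvable Lie algebra, and then to invoke Lie's Theorem, which asserts that a finite-dimensional irreducible module over a solvable Lie algebra over $\C$ is one-dimensional. Write $\rho$ for the representation. By \eqref{44.1} the elements $W_{j,n}$ span an abelian ideal $\mathfrak{n}$ of $\mathcal{Q}(k,N)$ (since $[W_{i,m},W_{j,n}]=0$ and $[L_{i,m},W_{j,n}]$ is again a combination of $W$'s), so $\rho(\mathfrak{n})$ is an abelian, hence solvable, ideal of $\rho(\mathcal{Q}(k,N))$. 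Because a Lie algebra with a solvable ideal and solvable quotient is solvable, and because $\rho(\mathcal{Q}(k,N))/\rho(\mathfrak{n})$ is a homomorphic image of $\rho(\mathfrak{l})$ where $\mathfrak{l}$ denotes the subalgebra spanned by the $L_{i,m}$, it suffices to prove that $\rho(\mathfrak{l})$ is solvable.

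First I would take $H:=\rho(L_{0,0})$ as a weight operator. From \eqref{44.1} one has $[L_{0,0},L_{i,m}]=(i-mp)L_{i,m}$, so $\rho(L_{i,m})$ carries the generalized $H$-eigenspace of eigenvalue $\gamma$ into that of eigenvalue $\gamma+(i-mp)$. As $V$ is finite-dimensional, $H$ has a finite set $S$ of eigenvalues, whence $\rho(L_{i,m})\neq 0$ forces $i-mp$ to lie in the finite set $\Delta:=S-S$. Combined with the surviving-index condition $i\leq k$ or $m\leq N$ coming from the quotient, this bounds the support of $\rho$: writing $\delta:=i-mp\in\Delta$, if $i\leq k$ then $|m|=|i-\delta|/|p|\leq (k+\max_{\delta\in\Delta}|\delta|)/|p|$ is bounded, while if $m\leq N$ then $|i|=|mp+\delta|\leq N|p|+\max_{\delta\in\Delta}|\delta|$ is bounded. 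Hence only finitely many $(i,m)$ give $\rho(L_{i,m})\neq 0$, so $\rho(\mathfrak{l})$ is finite-dimensional.

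Next I would grade $\mathfrak{l}$ by the total degree $d(L_{i,m})=i+m$, which by \eqref{44.1} is a Lie-algebra grading whose degree-zero part is $\C L_{0,0}$. Thus $\mathfrak{l}_{+}:=\bigoplus_{d\geq 1}\mathfrak{l}_d$ is an ideal with one-dimensional abelian quotient, and its $k$-fold iterated brackets lie in degrees $\geq k$. Since the previous step shows that $\rho$ annihilates $L_{i,m}$ for all sufficiently large degree, the iterated brackets of $\rho(\mathfrak{l}_{+})$ vanish beyond some point; that is, $\rho(\mathfrak{l}_{+})$ is nilpotent. Consequently $\rho(\mathfrak{l})$, being an extension of the abelian line $\C H$ by the nilpotent ideal $\rho(\mathfrak{l}_{+})$, is solvable, and together with the first paragraph this yields that $\rho(\mathcal{Q}(k,N))$ is solvable; Lie's Theorem then forces $\dim V=1$.

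I expect the finiteness of the support to be the main obstacle. For $p\notin\Q_{>0}$ each fibre $\{(i,m):i-mp=\delta\}$ with $\delta\in\Delta$ is already finite, so finite-dimensionality of $V$ would suffice on its own; but for a positive rational $p$ the zero-weight locus $\{i=mp\}$ and every fibre $\{i-mp=\delta\}$ are infinite, and here it is exactly the passage to the quotient by $\mathcal{I}(k,N)$ — which kills every $L_{i,m}$ having simultaneously $i>k$ and $m>N$ — that cuts each fibre down to a finite set. Thus the defining relations of $\mathcal{Q}(k,N)$, and not merely the finiteness of $\dim V$, are indispensable at this step. A minor point to verify carefully is that the nilpotency of $\rho(\mathfrak{l}_{+})$ is deduced solely from its lower central series landing in $\rho$-annihilated degrees, so that no grading need be induced on the image $\rho(\mathfrak{l})$ itself.
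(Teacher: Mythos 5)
Your proof is correct, but it takes a genuinely different and more robust route than the paper's, whose entire argument is the single assertion that $\mathcal{Q}(k,N)$ is a solvable Lie algebra followed by an appeal to Lie's Theorem. The divergence comes from how one reads the defining condition of $\mathcal{I}(k,N)$: you read it conjunctively (an $L_{i,m}$ dies only when both $i>k$ and $m>N$), so your $\mathcal{Q}(k,N)$ is infinite-dimensional and contains $\mathrm{span}\{L_{i,0}\mid i\in\Z_+\}$, which by \eqref{44.1} satisfies $[L_{i,0},L_{j,0}]=(j-i)L_{i+j,0}$ and is thus a copy of the non-solvable positive Witt algebra; for that reading the paper's one-line argument is unavailable, and your extra work --- using $H=\rho(L_{0,0})$ and the finiteness of its spectrum to cut the support of $\rho$ down to finitely many $L_{i,m}$, then the $(i+m)$-grading to get nilpotency of $\rho(\mathfrak{l}_+)$ --- is exactly what is needed to show that the image in $\mathfrak{gl}(V)$ is nonetheless solvable. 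The paper clearly intends the disjunctive reading (kill $L_{i,m}$ as soon as $i>k$ or $m>N$), as is forced by the identification $\mathcal{G}_1/\mathcal{G}_N\cong\mathcal{Q}(k,N-2)$ in the proof of Lemma \ref{lemm789}; under that reading $\mathcal{Q}(k,N)$ is finite-dimensional and the second half of your argument (abelian $W$-ideal, grading by $i+m$ with one-dimensional abelian degree-zero part and nilpotent positive part) is precisely the solvability verification that the paper omits. In short, your argument is valid under either reading, subsumes the paper's, and supplies the solvability check the paper leaves implicit; the only caveat is that you should state which reading of $\mathcal{I}(k,N)$ you are using, since the isomorphism invoked later in the paper requires the finite-dimensional one.
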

\begin{proof}
 It follows from Lie's
Theorem that we see that any irreducible finite-dimensional
module over the solvable Lie algebra  $\mathcal{Q}(k,N)$ is one-dimensional.
\end{proof}

\section{Classification of finite irreducible modules}
The aim of this section is to  give a complete  classification  of all finite non-trivial irreducible conformal modules over  $\mathfrak{B}(\alpha,\beta,p)$.
The main results will be presented after some preparations.
\subsection{Equivalence of modules}
 The following classification of finite non-trivial irreducible conformal modules over
 $\mathfrak{B}(p)$ appeared in \cite{SXY}, which will be used in the following.
\begin{lemm}\label{5.1}
Let $V$ be a finite non-trivial irreducible conformal module over  $\mathfrak{B}(p)$. Then $V$ is isomorphic to one of the following
 \begin{itemize}\parskip-7pt
 \item[{\rm (1)}] $V_{a,b}=\C[\partial]v$ with
 $$L_0\,{}_\lambda\, v=p(\partial+a\lambda+b)v$$
 for $a\in\C^*,b\in\C$, if  $p\neq-1$;
 \item[{\rm (2)}] $V_{a,b,c}=\C[\partial]v$ with
 $$L_0\,{}_\lambda\, v=-(\partial+a\lambda+b)v,\ L_1\,{}_\lambda\, v=cv$$ for  $a\in\C^*$  or  $c\in\C^*$, if $p=-1$.
\end{itemize}
\end{lemm}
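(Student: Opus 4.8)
The plan is to reduce to the rank-one case and then solve the resulting functional equations, the infinitely many generators $L_i$ being controlled through the higher brackets. I expect the reduction to rank one to be routine and the closing of the bracket tower (which is what singles out $p=-1$) to be the real work.

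First I would show that any such $V$ is free of rank one over $\C[\partial]$. Viewing $V$ as a module over the extended annihilation algebra $\mathcal{A}(\mathfrak{B}(p))^e$ via Proposition \ref{pro2.4}, I would filter $\mathcal{A}(\mathfrak{B}(p))^e$ by the subspaces spanned by the $(L_i)_{(m)}$ with $m$ above a given bound; since $[\partial,(L_i)_{(m)}]=-m(L_i)_{(m-1)}$, this filtration meets the hypotheses of Lemma \ref{lemm2.5}, giving $V=\C[\partial]\otimes_\C V_N$ with $V_N$ finite-dimensional. Then $V_N$ is an irreducible finite-dimensional module over a finite-dimensional solvable quotient of $\mathcal{A}(\mathfrak{B}(p))_+$, so by Lie's theorem, exactly as in Lemma \ref{lemm555}, it is one-dimensional and $V=\C[\partial]v$. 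The delicate point is verifying the finiteness hypothesis $V_n\neq0$ for $n\gg0$, since the annihilation algebra of the \emph{infinite} algebra $\mathfrak{B}(p)$ involves all $L_i$ simultaneously; one propagates a bound on the $\lambda$-degree of $L_i\,{}_\lambda\,v$ from $i=0,1$ to all $i$ using $[L_1\,{}_\lambda\,L_{i-1}]=((1+p)\partial+(i+2p)\lambda)L_i$. Alternatively one may restrict to the Virasoro subalgebra $\C[\partial](\frac1p L_0)$ and quote the classification of \cite{CK}.

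Next I would pin down $L_0$. On $V=\C[\partial]v$ the self-bracket $[L_0\,{}_\lambda\,L_0]=(p\partial+2p\lambda)L_0$ together with sesquilinearity forces $L_0\,{}_\lambda\,v=p(\partial+a\lambda+b)v$ for some $a,b\in\C$, and the corresponding Virasoro module is reducible exactly when $a=0$, which will yield the condition $a\in\C^*$ (barring a compensating higher action, the $p=-1$ phenomenon below). Writing $L_i\,{}_\lambda\,v=f_i(\partial,\lambda)v$ for $i\geq1$, the $L_0$--$L_i$ Jacobi identity gives a functional equation for $f_i$; equating the coefficients of the highest power of $\partial$ and setting $\mu=0$ shows that a nonzero $f_i$ with $i+p\neq0$ must have constant leading coefficient and $\partial$-degree $1+i/p$, while $i+p=0$ forces $f_i$ to be a nonzero constant. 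For generic $p$ the value $1+i/p$ is not a nonnegative integer, so $f_i=0$.

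The heart of the argument, and the main obstacle, is closing the tower of brackets among the $L_i$ ($i\geq1$) to isolate $p=-1$. For $i,j\geq1$ the right-hand side of $L_i\,{}_\lambda\,(L_j\,{}_\mu\,v)-L_j\,{}_\mu\,(L_i\,{}_\lambda\,v)=[L_i\,{}_\lambda\,L_j]\,{}_{\lambda+\mu}\,v$ lies in $\C[\partial,\lambda+\mu]v$, so the antisymmetric left-hand side, after the forced division by $(\lambda-\mu)$, must depend on $\lambda,\mu$ only through $\lambda+\mu$. Already $[L_1\,{}_\lambda\,L_1]=((1+p)\partial+(2+2p)\lambda)L_2$ is decisive: its action on $v$ equals $(1+p)(\lambda-\mu)f_2(\partial,\lambda+\mu)v$, so for $p\neq-1$ a nonzero $f_1$ of the degree found above makes $f_2(\partial,\lambda+\mu)$ an expression that, after dividing by $(1+p)(\lambda-\mu)$, still retains a genuine $\lambda\mu$-term, which is impossible for a function of $\partial$ and $\lambda+\mu$; whereas for $p=-1$ the right-hand side vanishes identically and no such obstruction arises. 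Hence for $p\neq-1$ every $f_i$ ($i\geq1$) vanishes, giving case (1) with $a\in\C^*$; a negative integer $p=-n$ with $n\geq2$ is excluded in the same spirit, since its would-be constant $f_n$ forces a lower nonzero $f_i$ of non-integral $\partial$-degree through the chained higher brackets. For $p=-1$ the constant $f_1=c$ survives while the $L_0$--$L_i$ equation kills $f_i$ for $i\geq2$, giving case (2) with the nontriviality condition $a\in\C^*$ or $c\in\C^*$. I expect the systematic bookkeeping of these consistency conditions across all $i,j$, rather than any single computation, to be the principal difficulty.
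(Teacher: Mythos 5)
First, a point of reference: the paper gives no proof of Lemma \ref{5.1} at all --- it is quoted verbatim from \cite{SXY} --- so there is no internal argument to measure yours against. Your outline does follow the strategy that \cite{SXY} (and this paper, in its own Lemma \ref{lemm789}) actually uses: truncate to finitely many generators, pass to the extended annihilation algebra to force rank one, then solve the functional equations for $f_i(\partial,\lambda)$. But two of the essential steps are asserted rather than carried out, and each is a genuine gap as written. For the rank-one reduction, Lemma \ref{lemm2.5} needs $V_n\neq 0$ for $n\gg 0$, i.e.\ a single vector annihilated by $(L_i)_{(m)}$ for \emph{all} $i$ simultaneously once $m$ is large. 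Because $\mathfrak{B}(p)$ has infinitely many generators, the bound on $m$ supplied by the conformal-module axiom depends on $i$, and the propagation you propose through $[L_1\,{}_\lambda\,L_{i-1}]$ only yields a bound growing with $i$, not a uniform one. The missing ingredient is exactly the analogue of Theorem \ref{5.2}: one must first prove $L_i\,{}_\lambda\,V=0$ for $i\gg 0$ (Lemma 3.1 of \cite{SXY}) so that $V$ factors through a finite quotient, and only then apply the filtration. Restricting to the Virasoro subalgebra and quoting \cite{CK} does not substitute for this, since $V$ need not be irreducible over that subalgebra.

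The second gap is the case $p\neq-1$ with $p$ non-generic. Your degree count $\deg_\partial f_i=1+i/p$ correctly kills $f_i$ whenever $1+i/p\notin\Z_+$, and combined with the chain of brackets $[L_i\,{}_\lambda\,L_j]$ it also disposes of $p=-n$, $n\geq 2$ (there $f_1=\cdots=f_{n-1}=0$ already forces the would-be constant $f_n$ to vanish). But it leaves untouched the infinite family of $p$ for which $1+i/p$ \emph{is} a nonnegative integer for some $i\geq 1$ --- for instance $p=1$, where every $f_i$ is a priori allowed to have degree $i+1$, or $p=1/k$. All of these you reduce to the claim that $f_1(\partial+\lambda,\mu)f_1(\partial,\lambda)-f_1(\partial+\mu,\lambda)f_1(\partial,\mu)=(1+p)(\lambda-\mu)f_2(\partial,\lambda+\mu)$ is inconsistent with a nonzero $f_1$ of the prescribed degree because the quotient by $\lambda-\mu$ ``retains a genuine $\lambda\mu$-term.'' That assertion is never verified, and it is precisely the computational heart of the classification: without it the lemma is not proved for $p=1$, which is the case the paper itself singles out as related to $gc_1$. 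To complete the argument you would need to write $f_1$ explicitly from the full $L_0$--$L_1$ identity (not just its $\mu=0$ specialization) and check the antisymmetrized product directly.
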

Now we   give the equivalence between the finite conformal modules over $\mathfrak{B}(\alpha,\beta,p)$
 and those over its quotient algebra $\mathfrak{B}(\alpha,\beta,p)_{[n]}$ for  some $n\in\Z_+$.
 \begin{theo}\label{5.2}
Assume that $V$ is a finite non-trivial conformal module over $\mathfrak{B}(\alpha,\beta,p)$.
Then the $\lambda$-actions  of $L_i$ and $W_i$ on $V$ are trivial for $i\gg0$.
\end{theo}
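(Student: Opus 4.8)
The plan is to pass to the extended annihilation algebra and run a Cheng--Kac type argument via Lemma~\ref{lemm2.5}, feeding in the explicit structure constants of $\mathcal{A}(\mathfrak{B}(\alpha,\beta,p))$ from Lemma~\ref{4.1}. First I would invoke Proposition~\ref{pro2.4} to regard $V$ as a module over $\mathcal{A}(\mathfrak{B}(\alpha,\beta,p))^e$ on which each mode acts locally, i.e. $(L_i)_{(m)}v=0$ and $(W_j)_{(m)}v=0$ for $m\gg0$ (a priori depending on $i,j,v$). Filtering $\mathcal{A}(\mathfrak{B}(\alpha,\beta,p))$ by the order of the modes, with $\mathscr{F}_n$ the span of all $(L_i)_{(m)}$ and $(W_j)_{(m)}$ having $m\ge n$, the relations $[\partial,(L_i)_{(m)}]=-m(L_i)_{(m-1)}$ and $[\partial,(W_j)_{(m)}]=-m(W_j)_{(m-1)}$ give $[\partial,\mathscr{F}_n]=\mathscr{F}_{n-1}$ for $n\ge1$, so (after discarding the, possibly zero, submodule of invariants to ensure minimality with $N\ge1$) Lemma~\ref{lemm2.5} will apply once its hypothesis is checked, yielding $V=\C[\partial]\otimes_{\C}V_N$ with $\dim_{\C}V_N<\infty$ and $\mathscr{F}_N V_N=0$, where $V_N=\{v\in V\mid \mathscr{F}_N v=0\}$.

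The main obstacle is exactly the hypothesis of Lemma~\ref{lemm2.5}, namely $V_N\neq0$ for some $N$; since $\mathfrak{B}(\alpha,\beta,p)$ is infinitely generated this is not automatic, and it amounts to a bound on the orders of the actions of $L_i,W_j$ that is \emph{uniform} in the conformal weights $i,j$. To attack it I would first apply Lemma~\ref{lemm2.5} to the Virasoro subalgebra $\C[\partial]\tfrac1pL_0$ (where the hypothesis is immediate, $L_0$ being a single generator), obtaining $V=\C[\partial]\otimes_{\C}U$ with $U$ finite-dimensional. Next I would introduce the energy operator $E=(L_0)_{(1)}$ and record from \eqref{44.1} the relations $[E,(L_i)_{(m)}]=(i+p-mp)(L_i)_{(m)}$ and $[E,(W_j)_{(n)}]=(j-p+\alpha-np)(W_j)_{(n)}+\beta p\,(W_j)_{(n+1)}$. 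A direct check using these shows that every kernel $V_n$ is finite-dimensional, is preserved by $E$, and that each mode carries $V_n$ into $V_{n+1}$. The hard point is to upgrade this to the required finiteness: the weight bookkeeping by itself leaves undetermined a resonant family of high-order modes (those with $m$ of size about $i/p$, for which the shift $i+p-mp$ stays bounded), and eliminating these is where one must combine the full set of brackets $[L_i{}_\lambda L_j]$, $[L_i{}_\lambda W_j]$ with $\dim_{\C}U<\infty$; I expect this to be the technical heart of the argument.

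Granting the structural output $V=\C[\partial]\otimes_{\C}V_N$ with $V_N$ finite-dimensional, $E$-invariant and $\mathscr{F}_N V_N=0$, the conclusion follows cleanly. By definition $(L_i)_{(m)}|_{V_N}=0$ and $(W_j)_{(n)}|_{V_N}=0$ for $m,n\ge N$, so only the finitely many orders below $N$ remain, and each such mode maps the finite-dimensional $E$-invariant space $V_N$ into the finite-dimensional $E$-invariant space $V_{N+1}$, with finite spectra $S_N$ and $S_{N+1}$. Since $(L_i)_{(m)}$ shifts the $E$-weight by $i+p-mp$, a nonzero $(L_i)_{(m)}|_{V_N}$ forces $i+p-mp\in S_{N+1}-S_N$, a fixed finite set; as $i\to\infty$ with $m<N$ fixed this is impossible, so $(L_i)_{(m)}|_{V_N}=0$ for all $m$ once $i$ is large. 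For the $W_j$, where $\mathrm{ad}_E$ is only upper triangular in the order but with distinct diagonal entries $j-p+\alpha-np$, I would argue by descending induction on $n<N$ (the top nonzero order being a genuine $E$-eigenvector) to get $(W_j)_{(n)}|_{V_N}=0$ for all $n$ once $j$ is large. Finally $[\partial,a_{(m)}]=-m a_{(m-1)}$ propagates these vanishings from $V_N$ to all of $V=\C[\partial]V_N$, giving $L_i{}_\lambda V=W_i{}_\lambda V=0$ for $i\gg0$, as claimed.
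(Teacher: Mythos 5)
Your overall strategy --- pass to the extended annihilation algebra, establish $V=\C[\partial]\otimes_{\C}V_N$ via Lemma \ref{lemm2.5}, and then kill the modes $(L_i)_{(m)}$, $(W_j)_{(n)}$ for large $i,j$ by comparing the $E$-weight shift $i+p-mp$ against the finite spectra of $E=(L_0)_{(1)}$ on $V_N$ and $V_{N+1}$ --- is coherent, and the final weight-shift argument would indeed close the proof \emph{if} the structural input were available. But you have left the decisive step unproved, and you say so yourself: the hypothesis of Lemma \ref{lemm2.5}, namely that $V_N\neq 0$ for some $N$, is exactly a bound on the order of the action that is \emph{uniform} in the conformal weight $i$, and for the infinitely generated algebra $\mathfrak{B}(\alpha,\beta,p)$ such a uniform bound is essentially the statement of the theorem in disguise. ``Granting the structural output'' therefore assumes something equivalent to what is to be shown; the resonance problem you flag (modes $(L_i)_{(m)}$ with $m$ near $(i+p)/p$, whose weight shift stays bounded as $i\to\infty$) is not a side issue but the heart of the matter, and your sketch offers no mechanism for eliminating it.

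The paper avoids all of this. Restricting $V$ to the subalgebra $\mathfrak{B}(p)$ spanned by the $L_i$, it quotes Lemma 3.1 of \cite{SXY}, which already yields $L_i\,{}_\lambda\, v=0$ for all $i\gg0$ and all $v\in V$; the uniformity in $i$ is the content of that cited lemma, not something re-derived. The $W_i$ are then disposed of in one line: for such an $i$ with moreover $i>|\alpha|$, the Jacobi identity gives
\begin{equation*}
L_i\,{}_\lambda\,(W_0\,{}_\mu\, v)-W_0\,{}_\mu\,(L_i\,{}_\lambda\, v)=\big((i+p)(\beta-\lambda-\mu)+(i+\alpha)\lambda\big)\,W_i\,{}_{\lambda+\mu}\,v,
\end{equation*}
whose left-hand side vanishes while the scalar prefactor is a nonzero polynomial in $\lambda,\mu$, forcing $W_i\,{}_\lambda\, v=0$. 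If you want a self-contained proof you must either reproduce the argument of \cite{SXY} or genuinely carry out the elimination of the resonant modes; as written, your proposal is a plan with the hard part marked ``to be done'' rather than a proof.
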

\begin{proof}
Clearly, $V$ is also a finite  conformal module over  $\mathfrak{B}(p)$.
Using Lemma 3.1 of \cite{SXY}, we obtain
$L_i\,{}_\lambda\, v=0$  for all $i\gg0$ and any $v\in V$. Choose such $i$ such that $i>|\alpha|$.
Fix $i\gg0$.  Using
 $$L_i\,{}_\lambda\,(W_0{}\,_\mu\, v)-W_0\,{}_\mu\,(L_i\,{}_\lambda\, v)=\Big(\big((i+p)(\partial+\beta)+(i+\alpha)\lambda\big)W_{i}\Big) \,{}_{\lambda+\mu}\, v,$$
one has $W_{i} \,{}_{\lambda}\, v=0$
 for any $v\in V$. The theorem holds.
 \end{proof}

\begin{rema}
A finite conformal module over $\mathfrak{B}(\alpha,\beta,p)$  is isomorphic to  a   finite conformal module over
$\mathfrak{B}(\alpha,\beta,p)_{[n]}$  for some  large enough  $n\in\Z$, where $\mathfrak{B}(\alpha,\beta,p)_{[n]}$ is defined by \eqref{b3.2}.
\end{rema}

\subsection{Rank one modules}
Now  we  give a characterization of    non-trivial  free   conformal modules of rank one  over
$\mathfrak{B}(\alpha,\beta,p)$.
From Lemma \ref{5.1}, we can define two classes of conformal modules $V_{a,b},V_{a,b,d}$ and $V_{a,b,c},V_{a,b,c,d}$ as follows.
\begin{itemize}\parskip-7pt
 \item[{\rm (1)}] $V_{a,b}=\C[\partial]v$ with
 $$L_0\,{}_\lambda\, v=p(\partial+a\lambda+b)v,\ W_0\,{}_\lambda\, v=W_i\,{}_\lambda\,v=L_i\,{}_\lambda\,v=0,\ i\geq1$$
 for $a,b\in\C$, if  $(\alpha,\beta)\neq(p,0)$;
 \item[{\rm (2)}] $V_{a,b,d}=\C[\partial]v$ with
 $$L_0\,{}_\lambda\, v=p(\partial+a\lambda+b)v,\ W_0\,{}_\lambda\, v=dv,\ W_i\,{}_\lambda\,v=L_i\,{}_\lambda\,v=0,\ i\geq1$$ for  $a,b,d\in\C$, if $(\alpha,\beta)=(p,0)$.
\end{itemize}
In fact, $V_{a,b}$ and  $V_{a,b,d}$ are just conformal modules over $\mathcal{W}(\alpha^\prime,\beta)$ (see \cite{LHW}).
\begin{itemize}\parskip-7pt
 \item[{\rm (3)}] $V_{a,b,c}=\C[\partial]v$ with
 $$L_0\,{}_\lambda\, v=-(\partial+a\lambda+b)v,\ L_1\,{}_\lambda\,v=cv,\ W_i\,{}_\lambda\, v=L_j\,{}_\lambda\,v=0,\ i\geq1,j\geq2$$
 for $a,b,c\in\C$, if  $(\alpha,\beta)\neq(-1,0)$;
 \item[{\rm (4)}] $V_{a,b,c,d}=\C[\partial]v$ with
 $$L_0\,{}_\lambda\, v=-(\partial+a\lambda+b)v,\ L_1\,{}_\lambda\,v=cv,\   W_0\,{}_\lambda\, v=dv, \    W_i\,{}_\lambda\,v=L_j\,{}_\lambda\,v=0,\ i\geq1,j\geq2$$ for  $a,b,c,d\in\C$, if $(\alpha,\beta)=(-1,0)$.
\end{itemize}
For $\mathfrak{B}(\alpha,\beta,-1)$,  we see that $V_{a,b,c}$
and  $V_{a,b,c,d}$  are just $\mathfrak{B}(p)$-conformal  modules  if $d=0$
(see \cite{SXY}).

\begin{theo}\label{77.1}
Let $V$ be a non-trivial free   conformal module of rank one  over  $\mathfrak{B}(\alpha,\beta,p)$.
\begin{itemize}\parskip-7pt
 \item[{\rm (1)}]
  If $p\neq-1$, and
 \begin{eqnarray*}
\begin{cases}
{\rm (i)}\ (\alpha,\beta)\neq(p,0), \ \mathrm{then}\  V\cong V_{a,b}  &\ \mbox{with}\  a,b\in\C,\\[4pt]
{\rm (ii)}\ (\alpha,\beta)=(p,0),  \ \mathrm{then}\ V\cong V_{a,b,d} &\  \mbox{with} \ a,b,d\in\C;
\end{cases}\\
\end{eqnarray*}
 \item[{\rm (2)}]
  If $p=-1$, and
 \begin{eqnarray*}
\begin{cases}
{\rm (iii)}\ (\alpha,\beta)\neq(-1,0), \ \mathrm{then}\  V\cong V_{a,b,c}  &\ \mbox{with}\  a,b,c\in\C,\\[4pt]
{\rm (iv)}\ (\alpha,\beta)=(-1,0),  \ \mathrm{then}\ V\cong V_{a,b,c,d} &\  \mbox{with} \ a,b,c,d\in\C.
\end{cases}\\
\end{eqnarray*}
\end{itemize}
\end{theo}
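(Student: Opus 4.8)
The plan is to use the rank-one freeness to turn the module axioms into a finite system of functional equations for the structure polynomials, and to organize the analysis by restricting successively to the Block subalgebra $\mathfrak{B}(p)$ and to the Virasoro-type subalgebra $\mathcal{W}(\alpha^\prime,\beta)$. Write $V=\C[\partial]v$ and encode the action of the generators as
\[
L_i\,{}_\lambda\,v=A_i(\partial,\lambda)\,v,\qquad W_i\,{}_\lambda\,v=B_i(\partial,\lambda)\,v,\qquad A_i,B_i\in\C[\partial,\lambda],
\]
so that conformal sesquilinearity is automatic and the entire module structure is carried by these polynomials. First I would pin down the $L$-part: since $V$ is \emph{a fortiori} a free rank-one module over the subalgebra $\mathfrak{B}(p)$ generated by the $L_i$, the computations of \cite{SXY} behind Lemma \ref{5.1} force $A_0(\partial,\lambda)=p(\partial+a\lambda+b)$ with $A_i=0$ for $i\geq1$ when $p\neq-1$, and $A_0(\partial,\lambda)=-(\partial+a\lambda+b)$, $A_1=c$ (a constant), $A_j=0$ for $j\geq2$ when $p=-1$. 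By Theorem \ref{5.2} the $B_i$ vanish for $i\gg0$ as well, so only finitely many of the $B_i$ remain unknown.

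Second, I would annihilate the higher $W$-modes. For $p\neq-1$ we have $A_1=0$, so the module axiom applied to $[L_1\,{}_\lambda\,W_j]$ in \eqref{3.1} has vanishing left-hand side and reduces to
\[
\big((1+p)\beta+(j+\alpha-p)\lambda-(1+p)\mu\big)\,B_{1+j}(\partial,\lambda+\mu)=0 ;
\]
since the coefficient of $\mu$ is $-(1+p)\neq0$, the prefactor is a nonzero element of $\C[\lambda,\mu]$ and hence $B_{j}=0$ for all $j\geq1$ at once. When $p=-1$ this route degenerates because $1+p=0$; applying the same idea to $[L_2\,{}_\lambda\,W_j]$ instead (whose leading coefficient is $2+p=1$ and for which $A_2=0$) gives $B_{j}=0$ for all $j\geq2$, leaving only the single extra mode $B_1$.

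Third, it remains to determine $B_0$ (and $B_1$ when $p=-1$). For $B_0$ I would restrict $V$ to the subalgebra $\C[\partial]\tfrac1p L_0\oplus\C[\partial]W_0\cong\mathcal{W}(\alpha^\prime,\beta)$ isolated in Section 3, so that the problem becomes the classification of free rank-one $\mathcal{W}(\alpha^\prime,\beta)$-modules treated in \cite{LHW}. Concretely, feeding $A_0$ into the module axiom for $[L_0\,{}_\lambda\,W_0]$ yields
\[
B_0(\partial+\lambda,\mu)A_0(\partial,\lambda)-A_0(\partial+\mu,\lambda)B_0(\partial,\mu)=\big((\alpha-p)\lambda-p\mu+p\beta\big)B_0(\partial,\lambda+\mu),
\]
and setting $\mu=0$ writes $B_0(\partial,\lambda)$ as $p(\partial+a\lambda+b)\big(\phi(\partial+\lambda)-\phi(\partial)\big)$ divided by $(\alpha-p)\lambda+p\beta$, where $\phi(\partial)=B_0(\partial,0)$. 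Polynomiality forces $\phi(\partial+\lambda_0)=\phi(\partial)$ at the root $\lambda_0$ of $(\alpha-p)\lambda+p\beta$, whence $\phi$ is constant and $B_0=0$, \emph{unless} $(\alpha^\prime,\beta)=(1,0)$; in the exceptional case $(\alpha,\beta)=(p,0)$ (respectively $(\alpha,\beta)=(-1,0)$ when $p=-1$) the constant $B_0=d$ is the only surviving solution. Combining the three steps produces precisely the four families $V_{a,b}$, $V_{a,b,d}$, $V_{a,b,c}$, $V_{a,b,c,d}$, and a direct check that each is a genuine module gives the converse.

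The hard part will be the leftover mode $B_1$ in the case $p=-1$, above all the resonant sub-case $\alpha=-1$, where the quick $\mu$-coefficient argument of the second step is unavailable. There I would combine the relation $c\big(B_1(\partial+\lambda,\mu)-B_1(\partial,\mu)\big)=(2+\alpha)\lambda\,B_2(\partial,\lambda+\mu)=0$ coming from $[L_1\,{}_\lambda\,W_1]$ (using $B_2=0$) with the functional equation obtained from $[L_0\,{}_\lambda\,W_1]$ and with the commutativity relation $[W_1\,{}_\lambda\,W_1]=0$; when $c\neq0$ the first relation makes $B_1$ independent of $\partial$, and a degree count in $\lambda$ then kills it, while when $c=0$ one relies on the $[W_0\,{}_\lambda\,W_1]=0$ relation and the $L_0$-functional equation directly. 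The only genuinely delicate bookkeeping is verifying that no nonconstant polynomial solution of these simultaneous equations survives, and this is where most of the routine computation will lie.
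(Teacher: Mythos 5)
Your skeleton (pin down the $L$-part via \cite{SXY}, kill the higher $W$-modes using $A_1=0$ resp.\ $A_2=0$, then solve functional equations for $B_0$ and $B_1$) matches the paper's proof in outline, and your second step is correct. The genuine gap is in your determination of $B_0$: you use only the $[L_0\,{}_\lambda\,W_0]$ axiom, and its $\mu=0$ specialization $A_0(\partial,\lambda)\bigl(\phi(\partial+\lambda)-\phi(\partial)\bigr)=\bigl(p\beta+(\alpha-p)\lambda\bigr)B_0(\partial,\lambda)$ does \emph{not} force $B_0=0$ whenever $(\alpha,\beta)\neq(p,0)$. Your root-periodicity argument needs the root $\lambda_0=-p\beta/(\alpha-p)$ to exist and be nonzero, so it fails exactly when $\beta=0$ (root $0$, periodicity vacuous) or $\alpha=p$ (no root). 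Worse, the failure is not repairable from the $L_0$--$W_0$ relation alone: for $\beta=0$, $\alpha=2p$ the polynomial $B_0(\partial,\lambda)=u(\partial+a\lambda+b)$ (i.e.\ $W_0$ acting as $\tfrac{u}{p}L_0$) satisfies the \emph{full} $[L_0\,{}_\lambda\,W_0]$ functional equation for every $u\in\C$, and is excluded only by $[W_0\,{}_\lambda\,W_0]=0$. The missing ingredient is precisely the paper's first move: from $B_0(\partial+\lambda,\mu)B_0(\partial,\lambda)=B_0(\partial+\mu,\lambda)B_0(\partial,\mu)$ a comparison of $\lambda$-degrees shows $B_0$ is independent of $\partial$, after which the $[L_0\,{}_\lambda\,W_0]$ equation collapses to $-p\mu B_0(\mu)=\bigl(p(\beta-\mu)+(\alpha-p)\lambda\bigr)B_0(\lambda+\mu)$ and $\mu=0$ gives $B_0=0$ unless $(\alpha,\beta)=(p,0)$, where $B_0$ is a constant $d$.

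There is a second, smaller gap in the leftover mode $B_1$ for $p=-1$. The relations you list do give $\partial$-independence of $B_1$ (via $[W_1\,{}_\lambda\,W_1]=0$ or $[L_1\,{}_\lambda\,W_1]$ when $c\neq0$) and then $[L_0\,{}_\lambda\,W_1]$ at $\mu=0$ yields $\bigl(-\beta+(2+\alpha)\lambda\bigr)B_1(\lambda)=0$; this kills $B_1$ \emph{except} when $(\alpha,\beta)=(-2,0)$, where a nonzero constant $B_1=e$ passes every equation on your list, and no ``degree count in $\lambda$'' can remove a constant. You need the one relation you omitted, $[L_1\,{}_\lambda\,W_0]$: its left-hand side is $c\bigl(B_0(\partial+\lambda,\mu)-B_0(\partial,\mu)\bigr)=0$ (as $B_0$ is by then known to be constant or zero) while its right-hand side is $(1+\alpha)\lambda\,B_1(\partial,\lambda+\mu)$, which forces $B_1=0$ for all $\alpha\neq-1$ and in particular closes the $(\alpha,\beta)=(-2,0)$ case. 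With these two repairs the argument goes through and coincides in substance with the paper's.
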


\begin{proof}
Let $V$ be a $\C[\partial]$-module.  Regarding $V$ as a conformal module over $\mathfrak{B}(p)$ and according to the result in \cite{SXY}, it is clear that
\begin{eqnarray*}
\begin{cases}
 \ L_0\,{}_\lambda\, v=p(\partial+a\lambda+b),\ L_i\,{}_\lambda\, v=0,\  a,b\in\C,i\geq1  &\ \mbox{if}\   p\neq-1;\\[4pt]
\ L_0\,{}_\lambda\, v=p(\partial+a\lambda+b),\ L_1\,{}_\lambda\, v=cv, \ L_i\,{}_\lambda\, v=0,\ a,b,c\in\C,i\geq2 &\  \mbox{if} \ p=-1.
\end{cases}\\
\end{eqnarray*}
By Theorem \ref{5.2}, we see that   $L_i\,{}_\lambda\, v=W_i\,{}_\lambda\, v=0$   for $i\gg0$.
 Let $k\in\Z_+$ be the largest integer such that the action of $\mathfrak{B}(\alpha,\beta,p)_k$ on $V$ is non-trivial.
By the assumption of $k$, we can suppose that
$$L_k\,{}_\lambda\, v=g_k(\partial,\lambda)v, \ W_k\,{}_\lambda\, v=h_k(\partial,\lambda)v,$$
where $g_k(\partial,\lambda),h_k(\partial,\lambda)\in\C[\partial, \lambda]$ and at least one of them is nonzero.
We note that $g_0(\partial,\lambda)=p(\partial+a\lambda+b),g_i(\partial,\lambda)=0$  for $i\geq2$ and
\begin{eqnarray*}
\begin{cases}
 \  g_1(\partial,\lambda)=0   &\ \mbox{if}\   p\neq-1;\\[4pt]
\  g_1(\partial,\lambda)=c   &\  \mbox{if} \ p=-1.
\end{cases}\\
\end{eqnarray*} For $i,j\in\Z_+$, by   Definition   \ref{defi-module}, one has
 \begin{eqnarray*}
 &&
   L_i\,{}_\lambda\, (W_j{}\,_\mu\, v)-W_j\,{}_\mu\,(L_i\,{}_\lambda\, v)=[L_i\,{}_\lambda\, W_j]\,{}_{\lambda+\mu}\, v,
 \ W_i\,{}_\lambda\, (W_j{}\,_\mu\, v)-W_j\,{}_\mu\,(W_i\,{}_\lambda\, v)=0,
\end{eqnarray*}
which gives that
 \begin{eqnarray}
&& \label{6.5}   \nonumber h_{j}(\partial+\lambda,\mu)g_{i}(\partial,\lambda)-g_{i}(\partial+\mu,\lambda)h_{j}(\partial,\mu)
 \\&=&\big((i+p)(\beta-\mu)+(j+\alpha-p)\lambda\big) h_{i+j}(\partial,\lambda+\mu),
 \\&&
 \label{6.6} \nonumber h_{j}(\partial+\lambda,\mu)h_{i}(\partial,\lambda)-h_{i}(\partial+\mu,\lambda)h_{j}(\partial,\mu)
 \\&=&0.
\end{eqnarray}
Setting $i=j=k$ in   \eqref{6.6}, and  comparing the highest degree of $\lambda$, we get
 $h_{k}(\partial,\lambda)=h_{k}(\lambda)$ for any $k\in\Z_+$.

\begin{case}
 $k=0$.
 \end{case}
 Then setting  $i=j=0$ in    \eqref{6.5}, we check that
 \begin{eqnarray}
\label{66.88}p\mu  h_0(\mu)+ \big(p(\beta-\mu)+(\alpha-p)\lambda\big) h_{0}(\lambda+\mu)=0.
\end{eqnarray}
Choosing  $\mu=0$ in \eqref{66.88}, if $\alpha\neq p$ or $\beta\neq0$, we have $h_{0}(\lambda)=0$.
Considering $\alpha=p,\beta=0$ in \eqref{66.88}, we obtain that $h_{0}(\lambda+\mu)=h_{0}(\mu)$, which implies $h_{0}(\lambda+\mu)=h_{0}(\mu)=d\in\C$.
\begin{case}
 $k\geq1$.
 \end{case}
Note that $g_k(\partial,\lambda),h_0(\partial,\lambda)\in\C$ for $k\geq1$. Taking  $i=k,j=0$ in    \eqref{6.5}, one can get  that
 \begin{eqnarray}
\label{6.8} \big((k+p)(\beta-\mu)+(\alpha-p)\lambda\big) h_{k}(\lambda+\mu)=0.
\end{eqnarray}
If $p\neq-k$, we immediately obtain  $h_{k}(\lambda)=0$. Let $p=-k$. In  \eqref{6.8}, if   $\alpha\neq -k$, one has $h_{k}(\lambda)=0$.
If $\alpha=-k=p$, we consider $i=0,j=k$ in  \eqref{6.5}, it can be rewritten as
 \begin{eqnarray}
 \label{6.9} p(\beta-\lambda-\mu) h_{k}(\lambda+\mu)=-p\mu h_{k}(\mu),
\end{eqnarray}
which implies $h_{k}(\lambda)=0$. Now we always have $h_{k}(\partial,\lambda)=0$ for $k\geq1$.
Thus, we can conclude that the action of $\mathfrak{B}(\alpha,\beta,p)_k$ on $V$ is  trivial for $k\geq2$.

Then we can directly obtain this theorem by the discussion above and the classification of rank one modules of $\mathfrak{B}(\alpha,\beta,p)$. This completes the proof.
\end{proof}

The irreducibilities of conformal  modules $V$ over $\mathfrak{B}(\alpha,\beta,p)$ defined  in Theorem  \ref{77.1}
are easy to determine.
\begin{prop}\label{pro77.111}
Let $V$ be a   conformal module    over  $\mathfrak{B}(\alpha,\beta,p)$ defined  in Theorem  \ref{77.1}.
\begin{itemize}\parskip-7pt
 \item[{\rm (1)}]
If  $V\cong V_{a,b}$,  then $V$ is irreducible  if and only if $a\neq0$. The module  $V_{0,b}$ contains a unique non-trivial submodule $(\partial+b)V_{0,b}\cong V_{1,b}$.
 \item[{\rm (2)}]
If $V\cong V_{a,b,d}$,  then $V$ is irreducible  if and only if $a\neq0$ or $d\neq0$. The module  $V_{0,b,0}$ contains a unique non-trivial submodule $(\partial+b)V_{0,b,0}\cong V_{1,b,0}$.
  \item[{\rm (3)}]
If $V\cong V_{a,b,c}$,  then $V$ is irreducible  if and only if $a\neq0$ or $c\neq0$. The module  $V_{0,b,0}$ contains a unique non-trivial submodule $(\partial+b)V_{0,b,0}\cong V_{1,b,0}$.
 \item[{\rm (4)}]
If $V\cong V_{a,b,c,d}$,  then $V$ is irreducible  if and only if $a\neq0$  or $c\neq0$  or $d\neq0$.  The module $V_{0,b,0,0}$ contains a unique non-trivial submodule $(\partial+b)V_{0,b,0,0}\cong V_{1,b,0,0}$.
\end{itemize}
\end{prop}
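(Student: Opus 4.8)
The plan is to exploit that $V=\C[\partial]v$ is free of rank one, so every nonzero $\C[\partial]$-submodule has the form $N=f(\partial)\C[\partial]v$ for a unique monic $f(\partial)\in\C[\partial]$, and $V$ is reducible precisely when such an $N$ with $\deg f\ge1$ is stable under all the $\lambda$-actions. Writing $X\,{}_\lambda\, v=\phi_X(\partial,\lambda)v$ for each generator $X$ that acts nontrivially, conformal sesquilinearity gives $X\,{}_\lambda\,\big(f(\partial)g(\partial)v\big)=f(\partial+\lambda)g(\partial+\lambda)\phi_X(\partial,\lambda)v$, so $N$ is a conformal submodule if and only if
\[
f(\partial)\mid f(\partial+\lambda)\,\phi_X(\partial,\lambda)\quad\text{in }\C[\lambda][\partial]
\]
for every such $X$ (testing $g=1$ already suffices, since the general case is obtained by multiplying by $g(\partial+\lambda)$). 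Thus the whole proposition reduces to a divisibility analysis, case by case, using only the nontrivial actions recorded in Theorem~\ref{77.1}: the generator $L_0$ in all cases, together with $W_0$ in cases (2) and (4) and $L_1$ in cases (3) and (4).

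First I would dispose of the auxiliary generators $W_0$ and $L_1$, whose $\lambda$-actions are the \emph{constants} $\phi_{W_0}=d$ and $\phi_{L_1}=c$. Here the criterion reads $f(\partial)\mid d\,f(\partial+\lambda)$, respectively $f(\partial)\mid c\,f(\partial+\lambda)$. If $d\neq0$ (resp. $c\neq0$), then comparing $\partial$-degrees and leading coefficients of the two monic polynomials $f(\partial)$ and $f(\partial+\lambda)$ forces $f(\partial+\lambda)=f(\partial)$ identically in $\lambda$, which is impossible for $\deg f\ge1$ (inspect the coefficient of $\partial^{\deg f-1}$). Hence no proper submodule exists and $V$ is irreducible as soon as one of $c,d$ is nonzero; this already yields the parts of the irreducibility criteria attributable to $c$ and $d$.

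Next I would analyze $L_0$, with $\phi_{L_0}(\partial,\lambda)=p(\partial+a\lambda+b)$, where the nonzero scalar $p$ is a unit and may be dropped. For a root $\gamma$ of $f$, specializing $\partial=\gamma$ in $f(\partial+\lambda)(\partial+a\lambda+b)$ forces $(\gamma+a\lambda+b)\equiv0$ in $\lambda$, since $f(\gamma+\lambda)\not\equiv0$; this means $a=0$ and $\gamma=-b$. Consequently, when $a\neq0$ no nonconstant $f$ occurs and $V$ is irreducible, whereas when $a=0$ the only admissible root is $-b$, so $f(\partial)=(\partial+b)^n$. A coprimality check then pins down $n=1$: because $\partial+b$ and $\partial+\lambda+b$ are coprime in $\C(\lambda)[\partial]$, the factor $\partial+b$ divides $(\partial+\lambda+b)^n(\partial+b)$ to exactly the first power, forcing $n\le1$. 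This identifies $(\partial+b)\C[\partial]v$ as the unique candidate submodule and, combined with the previous paragraph, establishes the full irreducibility criteria in all four cases.

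Finally, for the reducible modules I would verify that $(\partial+b)V$ really is a submodule (when $a=0$ and the remaining parameters $c,d$ all vanish) and compute its isomorphism type. Setting $w=(\partial+b)v$, a direct application of sesquilinearity gives $L_0\,{}_\lambda\, w=p(\partial+\lambda+b)w$, so the weight parameter jumps from $a=0$ to $a=1$ while $b$ is unchanged and the other parameters remain $0$, and the trivially acting generators still annihilate $w$. This yields $(\partial+b)V_{0,b}\cong V_{1,b}$ together with its analogues $(\partial+b)V_{0,b,0}\cong V_{1,b,0}$ and $(\partial+b)V_{0,b,0,0}\cong V_{1,b,0,0}$, and uniqueness follows from the $n=1$ conclusion above. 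The only genuinely delicate point is the multiplicity bookkeeping in the $L_0$-analysis—confirming that $\gamma=-b$ is the sole root and that it occurs with multiplicity exactly one—which is precisely where the coprimality argument is indispensable; every other step is routine specialization in $\lambda$.
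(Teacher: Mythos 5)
Your proof is correct, and it supplies exactly the standard divisibility argument (every $\C[\partial]$-submodule of a free rank-one module is $f(\partial)\C[\partial]v$, and $f(\partial)\mid f(\partial+\lambda)\phi_X(\partial,\lambda)$ is the submodule criterion) that the paper leaves implicit: the paper states this proposition without proof, remarking only that the irreducibilities ``are easy to determine.'' Your case analysis, the coprimality step pinning the multiplicity of $\partial+b$ to one, and the identification $(\partial+b)V_{0,b}\cong V_{1,b}$ all check out against the actions listed in Theorem \ref{77.1}.
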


\subsection{Main theorems}
The following result shows that all finite non-trivial irreducible
$\mathfrak{B}(\alpha,\beta,p)$-modules   are free of rank one.
\begin{lemm}\label{lemm789}
Any finite non-trivial irreducible   $\mathfrak{B}(\alpha,\beta,p)$-module $V$ must be free of rank one.
\end{lemm}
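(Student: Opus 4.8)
The plan is to pass to the extended annihilation algebra and run the Cheng--Kac argument of Lemma \ref{lemm2.5}. Write $\mathcal{A}=\mathcal{A}(\mathfrak{B}(\alpha,\beta,p))$ and $\mathcal{A}^e=\mathcal{A}(\mathfrak{B}(\alpha,\beta,p))^e$. By Proposition \ref{pro2.4} the module $V$ is the same datum as an $\mathcal{A}^e$-module on which every $a_{(m)}$ acts locally nilpotently in $m$, and by Theorem \ref{5.2} all but finitely many of the generators $L_i,W_j$ act trivially on $V$. For $n\in\Z_+$ put
$$\mathcal{A}_n=\mathrm{span}_{\C}\bigl\{(L_i)_{(m)},(W_j)_{(m)}\mid i,j\in\Z_+,\ m\geq n\bigr\}\subseteq\mathcal{A},$$
so that $\mathcal{A}_0=\mathcal{A}$; from the brackets in Lemma \ref{4.1} one reads off $[\partial,\mathcal{A}_n]=\mathcal{A}_{n-1}$ for $n\geq1$ together with $[\mathcal{A}_m,\mathcal{A}_n]\subseteq\mathcal{A}_{m+n-1}$. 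In particular each $\mathcal{A}_n$ with $n\geq1$ is a subalgebra, each $\mathcal{A}_N$ with $N\geq1$ is an ideal of $\mathcal{A}_1$, and $\mathcal{A}^e$ is generated as a Lie algebra by $\partial$ and $\mathcal{A}_1$.

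First I would set up Lemma \ref{lemm2.5} with the chain $\mathcal{A}^e\supset\mathcal{A}_0\supset\mathcal{A}_1\supset\cdots$ and the distinguished element $\partial$, and let $V_n=\{v\in V\mid\mathcal{A}_nv=0\}$. Local nilpotence applied to a finite generating set of $V$, together with Theorem \ref{5.2}, shows that each generator is annihilated by $\mathcal{A}_n$ once $n$ is large, so $V_n\neq0$ for $n\gg0$; let $N$ be the minimal such $n$. The crucial point here is that $N\geq1$. Indeed $V_0=\{v\mid\mathcal{A}v=0\}$ is precisely the space of invariants $\{v\mid\mathfrak{B}(\alpha,\beta,p)\,{}_\lambda v=0\}$, and a nonzero invariant $v$ would span a trivial conformal submodule $\C[\partial]v$, since $a\,{}_\lambda\,(\partial^r v)=(\partial+\lambda)^r(a\,{}_\lambda v)=0$; because $V$ is non-trivial and irreducible this is impossible, so $V_0=0$. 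Lemma \ref{lemm2.5} then gives $V=\C[\partial]\otimes_{\C}V_N$ with $V_N$ finite-dimensional, whence $V$ is free over $\C[\partial]$ of rank $\dim V_N$, and everything reduces to proving $\dim V_N=1$.

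To obtain this I would recognize $V_N$ as a module over a finite-dimensional solvable Lie algebra of the type treated in Lemma \ref{lemm555}. Since $[\mathcal{A}_1,\mathcal{A}_N]\subseteq\mathcal{A}_N$, the subalgebra $\mathcal{A}_1$ preserves $V_N$ while $\mathcal{A}_N$ kills it, so $V_N$ is a module over $\mathcal{A}_1/\mathcal{A}_N$; discarding the finitely many generators that act trivially (Theorem \ref{5.2}) factors this action through a finite-dimensional solvable quotient of the type $\mathcal{Q}(k,N)$. The module $V_N$ is non-trivial over it (otherwise $V_N\subseteq V_0=0$), so once $V_N$ is known to be irreducible, Lemma \ref{lemm555} forces $\dim V_N=1$, and therefore $V$ is free of rank one. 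Irreducibility of $V_N$ I would derive from that of $V$: a proper submodule $U_0\subsetneq V_N$ should produce a proper conformal submodule $\C[\partial]U_0\subsetneq V$, which would force $U_0\in\{0,V_N\}$.

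The main obstacle is exactly this correspondence between submodules of $V_N$ and conformal submodules of $V$. The subspace $\C[\partial]U_0$ is patently stable under $\partial$ and, since $\mathcal{A}_1U_0\subseteq U_0$, under $\mathcal{A}_1$ in $\partial$-degree zero; as $\mathcal{A}^e$ is generated by $\partial$ and $\mathcal{A}_1$, it would suffice to show $\mathcal{A}_1\,\partial^rU_0\subseteq\C[\partial]U_0$ for every $r$. The delicate part is the zeroth products $(L_i)_{(0)},(W_j)_{(0)}\in\mathcal{A}_0\setminus\mathcal{A}_1$, which raise the $\partial$-degree and whose action is entangled with $\partial$ through $(L_i)_{(0)}=-[\partial,(L_i)_{(1)}]$ and $(W_j)_{(0)}=-[\partial,(W_j)_{(1)}]$: expanding $y\,\partial^r u$ for $y\in\mathcal{A}_1$ leaves exactly one term of the form $(L_i)_{(0)}u$ or $(W_j)_{(0)}u$ to be controlled. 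Closing this requires an induction on the $\partial$-degree, exploiting the filtration $[\mathcal{A}_1,\mathcal{A}_N]\subseteq\mathcal{A}_N$ and the finite-dimensionality of $V_N$ to terminate the recursion. Once the correspondence is established, $\dim V_N=1$ follows, and feeding this back into Theorem \ref{77.1} identifies $V$ explicitly.
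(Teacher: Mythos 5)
Your overall skeleton (pass to the extended annihilation algebra, use the filtration and Lemma \ref{lemm2.5} to get $V=\C[\partial]\otimes_{\C}V_N$, then try to force $\dim V_N=1$ via solvability and Lemma \ref{lemm555}) matches the paper's, and your reduction to $N\geq1$ is fine. But there is a genuine gap at the step you yourself flag as ``the main obstacle'': the claim that an $\mathcal{A}_1/\mathcal{A}_N$-submodule $U_0\subsetneq V_N$ yields a conformal submodule $\C[\partial]U_0\subsetneq V$. The zeroth-product operators $(L_i)_{(0)}$ lie in $\mathcal{A}_0\setminus\mathcal{A}_1$ and do not preserve $V_N$ (one only has $[\mathcal{A}_0,\mathcal{A}_N]\subseteq\mathcal{A}_{N-1}$, not $\mathcal{A}_N$); writing $(L_i)_{(0)}u=\sum_j\partial^j w_j$ with $w_j\in V_N$, nothing in the $\mathcal{A}_1$-module structure of $V_N$ together with the relation $(L_i)_{(0)}=-[\partial,(L_i)_{(1)}]$ pins down the components $w_j$, so there is no reason they should lie in $\C[\partial]U_0$. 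Your proposed ``induction on the $\partial$-degree'' is never carried out, and the recursion you describe always leaves exactly these uncontrolled terms on the right-hand side, so it does not close without additional input.

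The paper gets around this in two different ways depending on the parameters. When $(\alpha,\beta)=(p,0)$ the element $\partial-\frac{1}{p}\bar L_{0,-1}$ is central in $\mathcal{G}$, so Schur's lemma forces $\bar L_{0,-1}v=p(\partial+b)v$ on all of $V$; this is precisely what tames the degree-raising operator, makes $V_N$ an irreducible $\mathcal{G}_1$-module, and lets Lemma \ref{lemm555} give $\dim V_N=1$ as you intend. When $(\alpha,\beta)\neq(p,0)$ the paper does \emph{not} prove irreducibility of $V_N$ at all: it takes a single common eigenvector $v\in V_N$ for the solvable algebra $\mathcal{G}_1/\mathcal{G}_N$ (Lie's theorem needs no irreducibility for this), generates $V=U(\mathcal{F})\cdot v$ by PBW together with irreducibility of $V$, shows that the $W_i$ act trivially on $V$ (or, when $\alpha=p$, $\beta\neq0$, that the action of $\bar W_{0,0}$ is determined by the $\mathfrak{B}(p)$-action), and then quotes the known classification of irreducible $\mathfrak{B}(p)$-modules (Lemma \ref{5.1}) to conclude rank one. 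To complete your argument you would need either to supply the missing submodule correspondence case by case along these lines, or to import the $\mathfrak{B}(p)$ classification as the paper does; as written the proof is incomplete at its decisive step.
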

\begin{proof}
As we know, any torsion module of $\C[\partial]$ is trivial as a module of Lie conformal algebra. Therefore, any finite non-trivial irreducible   $\mathfrak{B}(\alpha,\beta,p)$-module $V$ must be free as a $\C[\partial]$-module.
By Theorem \ref{5.2},  we see that the $\lambda$-actions  of $L_i$ and $W_i$ on $V$ are trivial for all $i\gg0$.
Let  $k\in\Z_+$ be the largest integer such that the $\lambda$-action  of $\mathfrak{B}(\alpha,\beta,p)_k$  on $V$ is non-trivial.
Then $V$ can be regarded as a    finite non-trivial irreducible  conformal module over $\mathfrak{B}(\alpha,\beta,p)_{[k]}$.
We denote $\mathcal{G}=\{\bar L_{i,m},\bar W_{j,n},\partial\mid i,j\in\Z_+,m,n\in\Z_+\cup\{-1\}\}$. Here we note that   $\{\bar W_{i,-1}\mid i\in\Z_+\}\nsubseteq \mathcal{A}(\mathfrak{B}(\alpha,\beta,p)_{[k]})^e$.  Furthermore, it follows from Proposition \ref{pro2.4} that
  a conformal  $\mathfrak{B}(\alpha,\beta,p)_{[k]}$-module  $V$ can be viewed as a module over the associated extended annihilation algebra
   $\mathcal{G}=\mathcal{A}(\mathfrak{B}(\alpha,\beta,p)_{[k]})^e$   satisfying
\begin{eqnarray}\label{7.1}
\bar L_{i,m}v=\bar W_{j,n}v=\bar W_{j,-1}v=0
\end{eqnarray}
for $0\leq i,j\leq k,m,n\gg0,v\in V$.
Denote
$$\mathcal{G}_z=\{\bar L_{i,m},\bar W_{j,n}\in\mathcal{G}\mid 0\leq i,j\leq k,m,n\geq z-1\},\ z\in\Z_+.$$
Then   $\mathcal{G}_0=\mathcal{A}(\mathfrak{B}(\alpha,\beta,p)_{[k]})$   and $\mathcal{G}\supset\mathcal{G}_0\supset\mathcal{G}_1\cdots$.
From the definition of extended annihilation algebra, it is clear that the element  $\partial\in\mathcal{G}$ satisfies $[\partial, \mathcal{G}_z] =\mathcal{G}_{z-1}$ for $z \geq1$. Denote
$$V_z=\{v\in V\mid \mathcal{G}_zv=0\},\ z\in\Z_+.$$
Clearly, $V_z\neq\emptyset$ for $z\gg0$ by \eqref{7.1}.  Let  $N\in \Z_+$ be the smallest integer such that $V_N\neq\emptyset$.

First we consider  $N=0$. Take $0\neq v\in V_0$. Then $\mathcal{U}(\mathcal{G})v=\C[\partial]\mathcal{U}(\mathcal{G}_0)v=\C[\partial]v$. Thus,
by the irreducibility of $V$, we see that  $V=\C[\partial]v$.
 From $\mathcal{G}_0$ is an ideal of $\mathcal{G}$, we can get  that $\mathcal{G}_0$ acts trivially on $V$.
  By Proposition \ref{pro2.4},  we see that $V$ is a   trivial conformal $\mathfrak{B}(\alpha,\beta,p)$-module, which gives a contradiction.

Next, consider  $N\geq1$.  Choose $0\neq v\in V_N$. We will discuss two cases in the following.
\begin{case}
$(\alpha,\beta)\neq (p,0).$
\end{case}
Then $V_N$ can be seen as a $\mathcal{G}_1/\mathcal{G}_N$-module.
Clearly,  $\mathcal{G}_1/\mathcal{G}_N$ is a finite-dimensional solvable Lie algebra. Because of Lie's
Theorem, there exists a
linear function $\chi$ on $\mathcal{G}_1/\mathcal{G}_N$  such that $x\cdot v = \chi(x) v$ for all $x\in \mathcal{G}_1/\mathcal{G}_N$.
Setting $\mathcal{F}=\mathrm{span}_{\C}\{\bar L_{i,-1}, \partial\mid 0\leq i\leq k\}$,  then  $\mathcal{G}$ has a decomposition of vector
spaces
 \begin{eqnarray*}
 \mathcal{G}=\mathcal{F}\oplus\mathcal{G}_1.
\end{eqnarray*}
By {\it Poincare-Birkhoff-Witt (PBW)} Theorem, the universal enveloping algebra of
$\mathcal{G}$ is
 \begin{eqnarray*}
 U(\mathcal{G})=U(\mathcal{F})\otimes U(\mathcal{G}_1),
\end{eqnarray*}
where $U(\mathcal{F})=\mathrm{span}_{\C}\{\bar L_{0,-1}^{i_0}\bar L_{1,-1}^{i_1}\cdots \bar L_{k,-1}^{i_k} \partial^j\mid i_0,i_1,\ldots,i_k,j\in\Z_+\}$
as a
vector space over $\C$.
Then  we
have
 \begin{eqnarray}\label{vb12}
 V=U(\mathcal{G})\cdot v=U(\mathcal{F}) \cdot v =\sum_{i_0,\ldots,i_k,j\in\Z_+}\C\partial^j\bar L_{0,-1}^{i_0}\bar L_{1,-1}^{i_1}\cdots \bar L_{k,-1}^{i_k} \cdot v.
\end{eqnarray}
Obviously, not all $i\in\Z_+$ satisfy $\bar L_{i,-1}\cdot v=0.$
Otherwise, we can deduce that $V = \C[\partial]v$ is free of
rank one, which contradicts to    $\bar L_{0,-1}v=p(\partial+b)v$ for some $b\in\C$ by Lemma  \ref{5.1} and \eqref{22.111}.

Now we  first   consider $\alpha\neq p$ in this case.
By the definition of  extended annihilation algebra, one can get
\begin{eqnarray}\label{44999}
[\bar L_{i,m},\bar W_{0,0}]=\big((m+1)(\alpha-p)\big)\bar W_{i,m}+\beta(i+p)\bar W_{i,m+1}
\end{eqnarray}
for any $i\in\Z_+,m\in\Z_+\cup\{-1\}$. If $\beta=0$, it is easy to get that $\bar W_{i,m}\cdot v=0$ for $0\leq i\leq k, 0\leq m\leq N-2$.
If $\beta\neq0$, we  set $0\leq i\leq k,m=N-2$ in \eqref{44999}. Then by $x\cdot v = \chi(x) v$ for all $x\in \mathcal{G}_1/\mathcal{G}_N$,
we have $\bar W_{i,N-2}\cdot v=0$ for $0\leq i\leq k$. By recursive method, one has $\bar W_{i,m}\cdot v=0$ for $0\leq i\leq k, 0\leq m\leq N-2$.
Therefore,
 \begin{eqnarray*}
W_i\, {}_\lambda \, v=\sum_{k\in\Z_+}\frac{\lambda^{(k)}}{k!}({ W_i}_{(k)}v)=\sum_{k\in\Z_+}\frac{\lambda^{(k)}}{k!}( W_{i,k} v)=0.
\end{eqnarray*}

Assume that  $R_\partial$ (respectively $L_\partial$) is the right (respectively left) multiplication by $\partial$ in
the universal enveloping algebra of $ \mathcal{G}$. It follows $R_\partial=L_\partial-ad_\partial$ and the binomial
formula that we obtain
  \begin{eqnarray}\label{909}
 \mathcal{G}_N\partial^k&=&R_\partial^k\mathcal{G}_N=(L_\partial-ad_\partial)^k\mathcal{G}_N
\nonumber\\&=&\sum^{k}_{j=0}\partial^{k-j}(-ad_\partial)^j\mathcal{G}_N=\sum^{k}_{j=0}\partial^{k-j}\mathcal{G}_{N-j}
\end{eqnarray}
for $N,k\in\Z_+$.
Since $W_i\, {}_\lambda \, v=0$ and $\C[\partial]\{W_i\mid i\in\Z_+\}$ is an ideal of $\mathfrak{B}(\alpha,\beta,p)$, we check that $W_i\, {}_\lambda \, V=0$   by
\eqref{vb12} and \eqref{909}. Thus, the irreducibility of $V$ as a $\mathfrak{B}(\alpha,\beta,p)$-module is equivalent
to that of $V$ as a $\mathfrak{B}(\alpha,\beta,p)/ \C[\partial]\{W_i\mid i\in\Z_+\}\cong\mathfrak{B}(p)$-module. Then the conclusion holds by Lemma \ref{5.1}.

Consider $\beta\neq0$.
If  $\alpha= p$,
by the definition of  extended annihilation algebra, one gets
\begin{eqnarray*}
[\bar L_{i,m},\bar W_{0,0}]=\beta (i+p) \bar W_{i,m+1}
\end{eqnarray*}
for any $i,m\in\Z_+$.  If $i\neq-p$, then by $x\cdot v = \chi(x) v$  for all $x\in \mathcal{G}_1/\mathcal{G}_N$, we have $\chi(\bar W_{i,m+1})\cdot v=0$ for $i,m\in\Z_+.$
Consider $i=-p$.
For $m\geq0$, we have \begin{eqnarray*}
[\bar L_{0,0},\bar W_{-p,m}]=p(\beta \bar W_{-p,m+1}-(m+1)\bar W_{-p,m}).
\end{eqnarray*}
Taking $m=N-2$ in above relations, we have $\bar W_{-p,N-2}\cdot v=0$. By recursive method, one see that $\bar W_{-p,m}\cdot v=0$ for $ 0\leq m\leq N-2$.
Thus, we can conclude that  $\chi(\bar W_{i,m+1})=0$ for $i,m\in\Z_+.$

 Now  prove $\chi(\bar W_{i,0})=0$ for $i\neq0$. Using
\begin{eqnarray*}
[\bar L_{0,0},\bar W_{j,0}]=j \bar W_{j,0}+\beta p \bar W_{j,1},
\end{eqnarray*}
then by $x\cdot v = \chi(x) v$,  we know that    $\chi(\bar W_{j,0})=0$ for $j\neq0.$
Suppose $\bar W_{0,0}\cdot v=\gamma v$ for $\gamma\in\C$.
By $[\partial,\bar W_{0,0}]=0,[\bar L_{i,-1},\bar W_{0,0}]=\beta(i+p)\bar W_{i,0}$  and \eqref{vb12},
we get
\begin{eqnarray*}
 &&\bar W_{0,0}\cdot\sum_{i_0,\ldots,i_k,j\in\Z_+}\eta_{i_0,\ldots,i_k,j}\partial^j\bar L_{0,-1}^{i_0}\bar L_{1,-1}^{i_1}\cdots \bar L_{k,-1}^{i_k} \cdot v
 \\&=&\sum_{i_0,\ldots,i_k,j\in\Z_+}\eta_{i_0,\ldots,i_k,j}\partial^j(\bar L_{0,-1}-\beta p)^{i_0}\bar W_{0,0}\bar L_{1,-1}^{i_1}\cdots \bar L_{k,-1}^{i_k} \cdot v
 \\&=&\sum_{i_0,\ldots,i_k,j\in\Z_+}\eta_{i_0,\ldots,i_k,j}\partial^j(\bar L_{0,-1}-\beta p)^{i_0}(\bar L_{1,-1}\bar W_{0,0}-\beta(1+p)\bar W_{1,0})\bar L_{1,-1}^{i_1-1}\cdots \bar L_{k,-1}^{i_k} \cdot v
 \\&&\cdots
 \\&=&\gamma\sum_{i_0,\ldots,i_k,j\in\Z_+}\eta_{i_0,\ldots,i_k,j}\partial^j(\bar L_{0,-1}-\beta p)^{i_0}\bar L_{1,-1}^{i_1}\cdots \bar L_{k,-1}^{i_k} \cdot v.
\end{eqnarray*}
By the above computation, we see that the action of $\bar W_{0,0}$ on $V$ can be obtained by  $\mathfrak{B}(p)$-actions.
Then $V$ can be seen as an irreducible $\mathfrak{B}(p)$-module.
Now the conclusion can be
directly obtained by Lemma \ref{5.1}.

\begin{case}
$(\alpha,\beta)=(p,0).$
\end{case}
By the definition of extended annihilation algebra, we obtain that
$\partial-\frac{1}{p}\bar L_{0,-1}$ is the central
element of $\mathcal{G}$. By Schur's Lemma, there exists some $b\in\C$ such that   $\bar L_{0,-1}v=p(\partial+b)v$.
Then it follows from $$\bar L_{i,-1}v=\frac{1}{p}[\bar L_{i,0},\bar L_{0,-1}]v,\ \bar W_{i,-1}v=0$$
that we know that the action of $\mathcal{G}_0$ on  $v$ is determined by $\mathcal{G}_1$ and
$\partial$.  Obviously,    $V_N$ is $\mathcal{G}_1$-invariant. By the irreducibility of $V$ and Lemma \ref{lemm2.5},
 we see that $V=\C[\partial]\otimes_{\C}V_N$  and $V_N$ is a non-trivial irreducible finite-dimensional $\mathcal{G}_1$-module.

If $N=1$,
     we  see that $V_1$ is a trivial $\mathcal{G}_1$-module  by the definition of $V_1$,  which creates a   contradiction.

If $N\geq 2$, it follows from the definition of $V_N$ that  it can be viewed as a $\mathcal{G}_1/ \mathcal{G}_N$-module.
 Note that $\mathcal{G}_1/ \mathcal{G}_N\cong\mathcal{Q}({k,N-2})$. By Lemma \ref{lemm555},
 one can see that   $V_N$ is $1$-dimensional. Then  $V$ is free of rank  one  as a conformal module over $\mathfrak{B}(\alpha,\beta,p)$  by Proposition \ref{pro2.4}.
This proves Lemma \ref{lemm789}.
\end{proof}

Now we present the main result  of this paper, which  shows that the irreducible modules $V$  defined in Theorem  \ref{77.1} exhaust
  all non-trivial finite irreducible conformal modules over $\mathfrak{B}(\alpha,\beta,p)$.
  It can be obtained by Theorem \ref{77.1}, Proposition \ref{pro77.111} and Lemma \ref{lemm789}.
\begin{theo}\label{7.111}
Let $V$ be a non-trivial finite irreducible conformal module over  $\mathfrak{B}(\alpha,\beta,p)$.
\begin{itemize}\parskip-7pt
 \item[{\rm (1)}]
  If $p\neq-1$, and
 \begin{eqnarray*}
\begin{cases}
{\rm (i)}\ (\alpha,\beta)\neq(p,0), \ \mathrm{then}\  V\cong V_{a,b}  &\ \mbox{with}\  a\in\C^*,b\in\C,\\[4pt]
{\rm (ii)}\ (\alpha,\beta)=(p,0),  \ \mathrm{then}\ V\cong V_{a,b,d} &\  \mbox{with} \ a\in\C^* \ \mathrm{or}\  d\in\C^*;
\end{cases}\\
\end{eqnarray*}
 \item[{\rm (2)}]
  If $p=-1$, and
 \begin{eqnarray*}
\begin{cases}
{\rm (iii)}\ (\alpha,\beta)\neq(-1,0), \ \mathrm{then}\  V\cong V_{a,b,c}  &\ \mbox{with}\  a\in\C^* \ \mathrm{or}\  c\in\C^*,\\[4pt]
{\rm (iv)}\ (\alpha,\beta)=(-1,0),  \ \mathrm{then}\ V\cong V_{a,b,c,d} &\  \mbox{with} \ a\in\C^* \ \mathrm{or}\  c\in\C^* \ \mathrm{or}\  d\in\C^*.
\end{cases}\\
\end{eqnarray*}
\end{itemize}
\end{theo}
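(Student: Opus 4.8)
The plan is to assemble this classification directly from the three structural results established above, since the statement is essentially a synthesis rather than a fresh argument. First I would take an arbitrary non-trivial finite irreducible conformal module $V$ over $\mathfrak{B}(\alpha,\beta,p)$ and invoke Lemma \ref{lemm789} to conclude that $V$ must be free of rank one as a $\C[\partial]$-module. This single reduction is what does all the heavy lifting: it collapses the general classification problem onto the rank-one case, which has already been settled.

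Next I would apply Theorem \ref{77.1} to pin down the isomorphism type of $V$. The classification there partitions into four mutually exclusive cases according to whether $p=-1$ and whether $(\alpha,\beta)$ equals the distinguished value ($(p,0)$ in the first case, $(-1,0)$ in the second): in each case $V$ is forced to be one of $V_{a,b}$, $V_{a,b,d}$, $V_{a,b,c}$, or $V_{a,b,c,d}$ for suitable complex parameters. Thus after this step every non-trivial finite irreducible module is accounted for, up to isomorphism, by one of these four explicit families, with no parameter restrictions yet imposed.

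Finally, the parameter constraints appearing in (i)--(iv) come directly from the irreducibility criteria of Proposition \ref{pro77.111}. For each of the four families that proposition records exactly when the module fails to be irreducible, namely when the relevant leading coefficients all vanish, in which case $(\partial+b)V$ is the unique proper nonzero submodule. Imposing irreducibility therefore excludes precisely those degenerate parameter values (forcing $a\in\C^*$ in case (i), $a\in\C^*$ or $d\in\C^*$ in case (ii), and so on) and yields the open conditions stated. Combining the three steps completes the argument.

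Regarding difficulty: for Theorem \ref{7.111} itself there is no genuine obstacle, as it is a bookkeeping combination of prior results. The substantive work lies upstream in Lemma \ref{lemm789}, where the reduction to rank one is carried out through the extended annihilation algebra $\mathcal{G}$, Lie's Theorem applied to the finite-dimensional solvable quotients $\mathcal{Q}(k,N)$, and the PBW decomposition $U(\mathcal{G})=U(\mathcal{F})\otimes U(\mathcal{G}_1)$. The most delicate point there is the case $(\alpha,\beta)=(p,0)$, where the $W$-action need not be recoverable from the ambient $\mathfrak{B}(p)$-action; in that situation one must instead exploit the central element $\partial-\frac{1}{p}\bar L_{0,-1}$ of $\mathcal{G}$ together with the filtration argument of Lemma \ref{lemm2.5} to force $V=\C[\partial]\otimes_{\C}V_N$ with $V_N$ one-dimensional.
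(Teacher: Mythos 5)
Your proposal is correct and follows exactly the route the paper takes: the authors state that Theorem \ref{7.111} "can be obtained by Theorem \ref{77.1}, Proposition \ref{pro77.111} and Lemma \ref{lemm789}," which is precisely your three-step synthesis (reduce to rank one, classify the rank-one modules, impose the irreducibility conditions). Your closing remarks on where the real difficulty lies (in Lemma \ref{lemm789}) also match the structure of the paper's argument.
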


\section{Applications}
By  the definition  of  \eqref{bn3.2},  one can see that  $\mathfrak{b}(n)$ for $n>0$ has a  $\C[\partial]$-basis $\{\bar L_{i},\bar W_{i}\mid 0\leq i\leq n\}$ with  the following
non-vanishing  $\lambda$-brackets:
\begin{eqnarray*}
&& [\bar L_i\, {}_\lambda \,\bar L_j]=\big((i-n)\partial+(i+j-2n)\lambda \big)\bar L_{i+j},
\\&&
[\bar L_i\, {}_\lambda \, \bar W_j]=\Big((i-n)(\partial+\beta)+(i+j+\alpha)\lambda\Big) \bar W_{i+j}
\end{eqnarray*}
for any $\alpha,\beta\in\C$ ($i+j>n$ the above relations are trivial).
The following   $\C[\partial]$-modules $\bar V_{a,b},\bar V_{a,b,d}$ are  conformal modules over  $\mathfrak{b}(n)$.
\begin{itemize}\parskip-7pt
 \item[{\rm (1)}] $\bar V_{a,b}=\C[\partial]v$ with  relations
 $\bar W_i\,{}_\lambda\, v=0,0\leq i\leq n$ and

 \begin{eqnarray*}
\bar L_i\,{}_\lambda\, v=
\begin{cases}
 \ -n(\partial+a\lambda+b)v,  &\ \mbox{if}\    i=0;\\[4pt]
 \ 0, &\  \mbox{if} \   1\leq i\leq n
\end{cases}
\end{eqnarray*}
 for $a,b\in\C$, if  $(\alpha,\beta)\neq(-n,0)$;
 \item[{\rm (2)}] $\bar V_{a,b,d}=\C[\partial]v$ with    relations
  \begin{eqnarray*}
\bar L_i\,{}_\lambda\, v=
\begin{cases}
 \ -n(\partial+a\lambda+b)v,  &\ \mbox{if}\    i=0;\\[4pt]
 \ 0, &\  \mbox{if} \   1\leq i\leq n;\\
\end{cases}\quad
\bar W_i\,{}_\lambda\, v=
\begin{cases}
 \ dv,  &\ \mbox{if}\    i=0;\\[4pt]
 \ 0, &\  \mbox{if} \   1\leq i\leq n
\end{cases}
\end{eqnarray*}
 for  $a,b,d\in\C$, if $(\alpha,\beta)=(-n,0)$.
\end{itemize}
The following   $\C[\partial]$-modules   $\bar V_{a,b,c},\bar V_{a,b,c,d}$ are conformal modules over  $\mathfrak{b}(1)$.
\begin{itemize}\parskip-7pt
 \item[{\rm (3)}] $\bar V_{a,b,c}=\C[\partial]v$ with   relations
  $\bar W_i\,{}_\lambda\, v=0,i=0,1$ and

 \begin{eqnarray*}
\bar L_i\,{}_\lambda\, v=
\begin{cases}
 \ - (\partial+a\lambda+b)v,  &\ \mbox{if}\    i=0;\\[4pt]
 \ cv, &\  \mbox{if} \    i=1
\end{cases}
\end{eqnarray*}
 for $a,b,c\in\C$, if  $(\alpha,\beta)\neq(-1,0)$;
 \item[{\rm (4)}] $\bar V_{a,b,c,d}=\C[\partial]v$ with   relations
 \begin{eqnarray*}
\bar L_i\,{}_\lambda\, v=
\begin{cases}
 \ -(\partial+a\lambda+b)v,  &\ \mbox{if}\    i=0;\\[4pt]
 \ cv, &\  \mbox{if} \     i=1;\\
\end{cases}\quad
\bar W_i\,{}_\lambda\, v=
\begin{cases}
 \ dv,  &\ \mbox{if}\    i=0;\\[4pt]
 \ 0, &\  \mbox{if} \   i=1
\end{cases}
\end{eqnarray*}
 for  $a,b,c,d\in\C$, if $(\alpha,\beta)=(-1,0)$.
\end{itemize}
By Theorem \ref{77.1}, we have the following corollary.
\begin{coro}\label{coro666}
Let $\bar V$ be a non-trivial free conformal module of rank one  over  $\mathfrak{b}(n)$.
\begin{itemize}\parskip-7pt
 \item[{\rm (1)}]
  If $n>1$, and
 \begin{eqnarray*}
\begin{cases}
{\rm (i)}\ (\alpha,\beta)\neq(-n,0), \ \mathrm{then}\ \bar V\cong \bar V_{a,b}  &\ \mbox{with}\  a,b\in\C,\\[4pt]
{\rm (ii)}\ (\alpha,\beta)=(-n,0),  \ \mathrm{then}\ \bar V\cong \bar V_{a,b,d} &\  \mbox{with} \ a,b,d\in\C;
\end{cases}\\
\end{eqnarray*}
 \item[{\rm (2)}]
  If $n=1$, and
 \begin{eqnarray*}
\begin{cases}
{\rm (iii)}\ (\alpha,\beta)\neq(-1,0), \ \mathrm{then}\ \bar V\cong \bar V_{a,b,c}  &\ \mbox{with}\  a,b,c\in\C,\\[4pt]
{\rm (iv)}\ (\alpha,\beta)=(-1,0),  \ \mathrm{then}\ \bar  V\cong \bar V_{a,b,c,d} &\  \mbox{with} \ a,b,c,d\in\C.
\end{cases}\\
\end{eqnarray*}
\end{itemize}
\end{coro}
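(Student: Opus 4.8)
The plan is to deduce Corollary \ref{coro666} directly from Theorem \ref{77.1} by exploiting the quotient presentation \eqref{bn3.2}, namely $\mathfrak{b}(n)=\mathfrak{B}(\alpha,\beta,-n)/\mathfrak{B}(\alpha,\beta,-n)_{\langle n+1\rangle}$. The underlying observation is that a conformal module over a quotient Lie conformal algebra $R/I$ is precisely a conformal module over $R$ on which the ideal $I$ acts trivially. Consequently, free rank-one $\mathfrak{b}(n)$-modules correspond bijectively to free rank-one $\mathfrak{B}(\alpha,\beta,-n)$-modules on which $\mathfrak{B}(\alpha,\beta,-n)_{\langle n+1\rangle}$ acts by zero.

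First I would fix a non-trivial free conformal module of rank one $\bar V=\C[\partial]v$ over $\mathfrak{b}(n)$ and pull it back along the canonical projection $\pi\colon\mathfrak{B}(\alpha,\beta,-n)\to\mathfrak{b}(n)$, setting $x\,{}_\lambda\,v:=\pi(x)\,{}_\lambda\,v$ for $x\in\mathfrak{B}(\alpha,\beta,-n)$. Since the underlying $\C[\partial]$-module is unchanged, this yields a non-trivial free rank-one $\mathfrak{B}(\alpha,\beta,-n)$-module $V$ on which the ideal acts trivially by construction. Specializing $p=-n$, the dichotomy $p\neq-1$ versus $p=-1$ becomes exactly $n>1$ versus $n=1$, and the condition $(\alpha,\beta)=(p,0)$ becomes $(\alpha,\beta)=(-n,0)$. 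Applying Theorem \ref{77.1} then identifies $V$ with one of $V_{a,b}$, $V_{a,b,d}$, $V_{a,b,c}$, $V_{a,b,c,d}$ according to the four cases (i)--(iv).

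Next I would check that each of these standard modules factors through $\mathfrak{b}(n)$. In every case of Theorem \ref{77.1} the action satisfies $L_i\,{}_\lambda\,v=W_i\,{}_\lambda\,v=0$ for all $i\geq1$ when $p\neq-1$, and $L_i\,{}_\lambda\,v=0$ for $i\geq2$ together with $W_i\,{}_\lambda\,v=0$ for $i\geq1$ when $p=-1$. Since $n\geq1$, in either situation $L_i\,{}_\lambda\,v=W_i\,{}_\lambda\,v=0$ for every $i\geq n+1$, so $\mathfrak{B}(\alpha,\beta,-n)_{\langle n+1\rangle}$ annihilates $V$ automatically. Restricting the $\lambda$-actions to the generators $\bar L_i=\pi(L_i)$ and $\bar W_i=\pi(W_i)$ with $0\leq i\leq n$ then reproduces precisely the modules $\bar V_{a,b}$, $\bar V_{a,b,d}$, $\bar V_{a,b,c}$, $\bar V_{a,b,c,d}$ displayed before the statement, with the same ranges of parameters $a,b,c,d\in\C$.

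The only point that genuinely requires attention — and essentially the sole content beyond bookkeeping — is verifying that the truncation to degrees $\leq n$ loses no modules. This is guaranteed by Theorem \ref{77.1} itself, which shows that the non-trivial part of any free rank-one action is already concentrated in degree $0$ (and in degree $1$ when $p=-1$), both of which survive in $\mathfrak{b}(n)$ for every $n\geq1$. Hence the pull-back and restriction maps are mutually inverse bijections between free rank-one $\mathfrak{b}(n)$-modules and the trivially-extended free rank-one $\mathfrak{B}(\alpha,\beta,-n)$-modules, and Corollary \ref{coro666} follows immediately from Theorem \ref{77.1}.
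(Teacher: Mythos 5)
Your proposal is correct and follows essentially the same route as the paper, which simply derives the corollary from Theorem \ref{77.1} by identifying conformal $\mathfrak{b}(n)$-modules with conformal $\mathfrak{B}(\alpha,\beta,-n)$-modules killed by the ideal $\mathfrak{B}(\alpha,\beta,-n)_{\langle n+1\rangle}$ and specializing $p=-n$. The extra verification you supply (that the classified rank-one modules automatically annihilate the ideal, so nothing is lost in the truncation) is exactly the bookkeeping the paper leaves implicit.
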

 Furthermore, for the above modules we have the same irreducibility assertions as those for $\mathfrak{B}(\alpha,\beta,-n)$-modules in Proposition \ref{pro77.111}.
  The irreducible modules in Corollary \ref{coro666} exhaust all non-trivial finite irreducible conformal modules over $\mathfrak{b}(n)$.
\begin{coro}
Let $\bar V$ be a finite non-trivial irreducible conformal module   over  $\mathfrak{b}(n)$.
\begin{itemize}\parskip-7pt
 \item[{\rm (1)}]
  If $n>1$, and
 \begin{eqnarray*}
\begin{cases}
{\rm (i)}\ (\alpha,\beta)\neq(-n,0), \ \mathrm{then}\ \bar V\cong \bar V_{a,b}  &\ \mbox{with}\  a\in\C^*,b\in\C,\\[4pt]
{\rm (ii)}\ (\alpha,\beta)=(-n,0),  \ \mathrm{then}\ \bar V\cong \bar V_{a,b,d} &\  \mbox{with} \ a\in\C^* \  \mathrm{or} \ d\in\C^*;
\end{cases}\\
\end{eqnarray*}
 \item[{\rm (2)}]
  If $n=1$, and
 \begin{eqnarray*}
\begin{cases}
{\rm (iii)}\ (\alpha,\beta)\neq(-1,0), \ \mathrm{then}\ \bar V\cong \bar V_{a,b,c}  &\ \mbox{with}\  a\in\C^* \  \mathrm{or} \ c\in\C^*,\\[4pt]
{\rm (iv)}\ (\alpha,\beta)=(-1,0),  \ \mathrm{then}\ \bar  V\cong \bar V_{a,b,c,d} &\  \mbox{with} \ a\in\C^* \  \mathrm{or} \ c\in\C^* \  \mathrm{or} \ d\in\C^*.
\end{cases}\\
\end{eqnarray*}
\end{itemize}
\end{coro}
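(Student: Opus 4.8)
The plan is to derive this corollary directly from the main classification Theorem \ref{7.111}, using the fact that $\mathfrak{b}(n)$ is by definition the quotient $\mathfrak{B}(\alpha,\beta,-n)/\mathfrak{B}(\alpha,\beta,-n)_{\langle n+1\rangle}$. First I would set up the inflation--restriction correspondence attached to the canonical projection $\pi\colon\mathfrak{B}(\alpha,\beta,-n)\to\mathfrak{b}(n)$: a conformal $\mathfrak{b}(n)$-module $\bar V$ inflates to a conformal $\mathfrak{B}(\alpha,\beta,-n)$-module with the same underlying $\C[\partial]$-module and $\lambda$-action $a\,{}_\lambda\,w=\pi(a)\,{}_\lambda\,w$, on which the ideal $\mathfrak{B}(\alpha,\beta,-n)_{\langle n+1\rangle}$ acts trivially; conversely any $\mathfrak{B}(\alpha,\beta,-n)$-module annihilated by this ideal descends to $\mathfrak{b}(n)$. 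Because $\pi$ is surjective, a $\C[\partial]$-submodule is $\mathfrak{B}(\alpha,\beta,-n)$-invariant precisely when it is $\mathfrak{b}(n)$-invariant, so the two submodule lattices coincide and finiteness, non-triviality and irreducibility are preserved in both directions.

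With this correspondence in hand, I would take a finite non-trivial irreducible $\mathfrak{b}(n)$-module $\bar V$, inflate it to a finite non-trivial irreducible $\mathfrak{B}(\alpha,\beta,-n)$-module, and apply Theorem \ref{7.111} with $p=-n$. The split of the corollary into $n>1$ and $n=1$ matches exactly the dichotomy $p\neq-1$ versus $p=-1$ in Theorem \ref{7.111}: for $n>1$ one has $p=-n<-1$, so part (1) applies and $\bar V$ must be $V_{a,b}$ or $V_{a,b,d}$ according to whether $(\alpha,\beta)=(-n,0)$; for $n=1$ one has $p=-1$, so part (2) applies and $\bar V$ must be $V_{a,b,c}$ or $V_{a,b,c,d}$ according to whether $(\alpha,\beta)=(-1,0)$. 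In every case Theorem \ref{7.111} forces freeness of rank one together with the stated genericity constraints on $a,b,c,d$.

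It then remains to check that each module listed in Theorem \ref{7.111} genuinely descends along $\pi$ and to identify it with the explicit $\mathfrak{b}(n)$-module defined before Corollary \ref{coro666}. For the descent I would note that in all four families one has $L_i\,{}_\lambda\,v=0$ and $W_i\,{}_\lambda\,v=0$ for $i\geq2$, and since $n\geq1$ gives $n+1\geq2$, the generators $L_i,W_i$ with $i\geq n+1$ act trivially, so the action factors through $\pi$. For the identification I would write $\bar L_i=\pi(L_i)$, $\bar W_i=\pi(W_i)$ and compare $\lambda$-actions term by term; substituting $p=-n$ turns $p(\partial+a\lambda+b)$ into $-n(\partial+a\lambda+b)$, so $V_{a,b},V_{a,b,d},V_{a,b,c},V_{a,b,c,d}$ descend to exactly $\bar V_{a,b},\bar V_{a,b,d},\bar V_{a,b,c},\bar V_{a,b,c,d}$. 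Since irreducibility is preserved under inflation, the criteria of Proposition \ref{pro77.111} transfer verbatim and give the asserted irreducibility conditions.

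The argument is essentially a translation, so the main obstacle is bookkeeping rather than anything conceptual: the two genuinely load-bearing verifications are that the submodule lattices really agree under inflation, so that no irreducibility is created or destroyed, and that every module in the Theorem \ref{7.111} list is annihilated by $\mathfrak{B}(\alpha,\beta,-n)_{\langle n+1\rangle}$. Both are secured by the inequality $n+1\geq2$ together with the vanishing of the higher $\lambda$-actions; the remaining comparison of explicit formulas is immediate from the definitions.
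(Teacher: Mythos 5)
Your proposal is correct and follows the same route the paper intends: inflate a $\mathfrak{b}(n)$-module along the projection from $\mathfrak{B}(\alpha,\beta,-n)$, apply Theorem \ref{7.111} with $p=-n$ (so $n>1$ versus $n=1$ matches $p\neq-1$ versus $p=-1$), and check that each listed module kills the ideal $\mathfrak{B}(\alpha,\beta,-n)_{\langle n+1\rangle}$ and descends to the corresponding $\bar V$. The paper leaves this deduction implicit, and your write-up simply makes the inflation--restriction bookkeeping explicit.
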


\section*{Acknowledgements}
This work was supported by the National Natural Science Foundation of China (No.11801369,   11871421, 11431010)
and the Scientific Research Foundation of Hangzhou Normal University (No. 2019QDL012).
The authors would like  to thank Profs. X. Dai and C. Xia    for a lot of helpful discussions when
they were preparing the paper.

\small

\bigskip

Haibo Chen
\vspace{2pt}

  School of  Statistics and Mathematics, Shanghai Lixin University of  Accounting and Finance,   Shanghai
201209, China

\vspace{2pt}
rebel1025@126.com

\bigskip

Yanyong Hong
\vspace{2pt}

Department of Mathematics, Hangzhou Normal University
Hangzhou 311121,  China

\vspace{2pt}
hongyanyong2008@yahoo.com

\bigskip

Yucai Su
\vspace{2pt}

  School of Mathematical Sciences, Tongji University, Shanghai
200092, China
\vspace{2pt}

ycsu@tongji.edu.cn
\end{document}